\newcommand{\pd}{\partial}
\newcommand{\bC}{{\mathbb C}}
\newcommand{\bN}{{\mathbb N}}
\newcommand{\bP}{{\mathbb P}}
\newcommand{\bZ}{{\mathbb Z}}
\newcommand{\cE}{{\mathcal E}}
\newcommand{\cF}{{\mathcal F}}
\newcommand{\cM}{{\mathcal M}}
\newcommand{\cP}{{\mathcal P}}
\newcommand{\half}{\frac{1}{2}}
\newcommand{\cW}{{\mathcal W}}
\newcommand{\Mbar}{\overline{\cM}}
\newcommand{\vac}{|0\rangle}
\newcommand{\lvac}{\langle 0|}
\newcommand{\cor}[1]{\langle {#1} \rangle}
 \newcommand{\bp}{{\mathbf p}}
\DeclareMathOperator{\Id}{Id}
\DeclareMathOperator{\res}{res}
\newtheorem{theorem}{Theorem}[section]
\newtheorem{theorem/definition}{Theorem/Definition}[section]
\newtheorem{proposition}{Proposition}[section]
\newtheorem{lemma}{Lemma}[section]
\newtheorem{corollary}{Corollary}[section]
\newtheorem{conjecture}{Conjecture}[section]
\theoremstyle{remark}
\theoremstyle{definition}
\newcommand{\bml}{\begin{multline}}
\newcommand{\eml}{\end{multline}}
\newcommand{\bag}{\begin{align}}
\newcommand{\egn}{ \end{align}}
\newcommand{\be}{\begin{equation}}
\newcommand{\ee}{\end{equation}}
\newcommand{\bea}{\begin{eqnarray}}
\newcommand{\eea}{\end{eqnarray}}
\newcommand{\ben}{\begin{eqnarray*}}
\newcommand{\een}{\end{eqnarray*}}
\newcommand{\bet}{\begin{equation}
\begin{split}}
\newcommand{\eet}{\end{split}
\end{equation}}
\definecolor{yellow}{rgb}{1,1,0}
\definecolor{orange}{rgb}{1,.7,0}
\definecolor{red}{rgb}{1,0,0} \definecolor{blue}{rgb}{0,0,1}
\definecolor{white}{rgb}{1,1,1}
\definecolor{A}{rgb}{.75,1,.75}
\begin{document}

\title[Open String Invariants and Mirror Curve]
{Open String Invariants and Mirror Curve of the Resolved Conifold}
\author{Jian Zhou}
\address{Department of Mathematical Sciences\\Tsinghua University\\Beijing, 100084, China}
\email{jzhou@math.tsinghua.edu.cn}

\begin{abstract}
For the resolved conifold with one outer D-brane in arbitrary framing,
we present some results for the open string partition functions obtained by  some operator manipulations.
We prove some conjectures by Aganagic-Vafa and Aganagic-Klemm-Vafa
that relates such invariants to the mirror curve of the resolved conifold.
This establishes local mirror symmetry for the resolved confolds for holomorphic disc invariants.
We also verify an integrality conjecture of such invariants by Ooguri-Vafa in this case
and present closed formulas for some Ooguri-Vafa type invariants in genus $0$ and arbitrary genera.
\end{abstract}


\maketitle

\section{Introduction}

The resolved conifold is probably the most studied noncompact Calabi-Yau $3$-fold,
comparable as its compact counterpart the quintic in $\bP^4$.
It provides testing ground for new developments and is a rich source of many new discoveries.
In this work we will study it as the first  nontrivial local Calabi-Yau geometries
for which we try to establish local mirror symmetry in the new formalism of the local B-theory \cite{Mar, BKMP},
which involves open string invariants.

The local $A$-theory of the resolved conifold has played a crucial role in the development
of duality between topological string theory and Chern-Simons theory.
Witten \cite{Wit} proposed that the large N expansion of Chern-Simons link invariants
can be identified with the genus expansion of open string invariants of the deformed conifold $T^*S^3$,
which counts holomorphic maps from Riemann surfaces boundaries to $T^*S^3$,
with $h$ boundary components mapped to the Lagrangian submanifold $S^3$.
Gopakumar-Vafa \cite{Gop-Vaf} proposed that after summing over the number $h$ of boundary components,
one gets from the large $N$ expansion of the $3$-manifold invariant
of $S^3$ the genus expansion of closed string invariants of the resolved conifold.
Ooguri and Vafa \cite{Oog-Vaf} extended this and proposed that the large $N$ expansion of link invariants
corresponds to open string invariants of the resolved conifold,
associated with some Lagrangian submanifolds corresponding to the link.
Mari\~no-Vafa \cite{Mar-Vaf} extended this further by considering framed knots.
The duality has been extended to other local Calabi-Yau geometries \cite{AMV} and a formalism called
the topological vertex \cite{AKMV} has been developed to compute open and closed string invariants
of local toric Calabi-Yau geometries.

Much of these developments in the physics literature has been made mathematically rigorous,
using the method of localization.
This method reduces the calculations to Hodge integrals which are intersection numbers
on the Deligne-Mumford moduli spaces.
A byproduct in \cite{Mar-Vaf} is a conjectural closed formula for some Hodge integrals, by comparing with the formal localization
calculations performed for the resolved conifold by Katz and Liu \cite{Kat-Liu}.
This has played no role in later physical development,
but it turns out to provide the crucial clue for the mathematical development.
The formula for Hodge integrals conjectured by Mari\~no and Vafa has been proved \cite{LLZ1, Oko-Pan}
and generalized \cite{Zho1, LLZ2, LLLZ},
and all such formulas have played an important role in the mathematical development of
the calculations of open and closed string invariants of local Calabi-Yau geometries \cite{Zho2, LLLZ}.
Note mathematically the open string invariants is very difficult to define in symplectic geometry.
Li and Song \cite{Li-Son} notice that in the case of the resolved conifold
one can use the relative moduli space in algebraic geometry to define open string invariant.
This is also the approach taken in the mathematical theory of the topological vertex \cite{LLLZ}
to bypass the difficulties in the definition of open string invariants for general local Calabi-Yau geometries.
So now we have a mathematically rigorous way to define and compute certain open string invariants
of local Calabi-Yau geometries in algebraic geometry,
which should have a parallel theory in symplectic geometry as shown in the resolved conifold case in \cite{Li-Son}.

Local B-theory and local mirror symmetry  in genus $0$ have also been studied extensively in physics literature.
For closed strings, see e.g. \cite{CKYZ}.
For open string, Aganagic and Vafa \cite{AV} conjectured a relationship between
genus $0$ open string invariants and the equation for the mirror curve
given by the Hori-Iqbal-Vafa construction  \cite{HIV}.
This was extended by Aganagic, Klemm and Vafa \cite{AKV} to include the effect of framing,
and the case of the resolved conifold was treated in \cite{Mar-Vaf}.
Klemm and Zaslow \cite{Kle-Zas} studied local B-theory from the point of view of holomorphic anomaly equation
\cite{BCOV}.
For another approach see \cite{ADKMV}.
Recently there has appeared a new formalism of the local B-theory in arbitrary genera.
Extending results in the theory of matrix models,
Eynard-Orantin \cite{EO} developed a formalism that recursively defines some differentials
from a plane algebraic curve.
See also their more recent work \cite{EO2}.
These differentials correspond to $n$-point open string partition functions
in arbitrary genus.
Mari\~no \cite{Mar} proposed that this formalism can be used to define unambiguously the local B-theory
for local toric Calabi-Yau geometries,
where the algebraic curves are taken to be the mirror curves.
Bouchard, Klemm, Mari\~no, Pasquetti \cite{BKMP} extended this proposal to more general setting based on \cite{AKV},
including some conjectures on the framing transformation and the effects of phase changes of the D-branes.
Bouchard and Mari\~no \cite{Bou-Mar} made the proposal more explicit in the case of $\bC^3$ with one D-brane,
incorporating the ideas from \cite{AKV}.
Mathematically,
this involves one-partition Hodge integrals in the Mari\~no-Vafa formula \cite{Mar-Vaf, LLZ1, Oko-Pan}.
They also derived a related conjecture for Hurwitz number in the same paper.

Borot-Eynard-Safnuk-Mulase \cite{BEMS}
and Eynard-Safnuk-Mulase \cite{EMS} have proved Bouchard-Mari\~no Conjecture for Hurwitz numbers by two different methods.
Based on the method in \cite{EMS},
two proofs, one by Chen \cite{Che} and one by the author \cite{Zho5}, have appeared for Bouchard-Mari\~no Conjecture
for the case of $\bC^3$ with one D-brane.
The proof for the case of $\bC^3$ with three D-branes,
i.e.,
the case of topological vertex \cite{AKMV, LLLZ},
has been given by the author in a more recent work \cite{Zho6}.
Our strategy for the study the case of more general local Calabi-Yau geometries
is to generalize the earlier results.
It has three steps:
\begin{enumerate}
\item[Step 1.] Compute the genus zero one point functions of open string invariants and relate it to the mirror curve.
\item[Step 2.] Compute the genus zero two point functions of open string invariants and relate it to the Bergman kernel
of the mirror curve.
\item[Step 3.] Establish the Eynard-Orantin type recursion relations,
favorably by the cut-and-join equations.
\end{enumerate}

In this work we focus on the first step for the resolved conifold,
leaving the other two steps to subsequent work in progress.
We will start with the calculation of the open string invariants by topological vertex.
Such calculations involve summations over partitions and so are often very complicated.
A key technique we use in this work is to transform
thecalculations to vacuum expectation values of some operators
developed by Okounkov-Pandharipande \cite{Oko-Pan1, Oko-Pan2}
originally for Hurwitz numbers.
This was first proposed in an earlier work \cite{Zho4} for the case of inner brane,
whereas in this work we deal with the case of out brane.
However we go one step further by finding simple closed formula for one-point functions of open string invariant
and hence establish the conjectured relation with the mirror curve proposed in \cite{AV, AKV}.
This establishes local mirror symmetry for the resolved conifold
in the case of holomorphic disc invariants.
The case of inner brane can be treated similarly with details to be presented in a separate paper.
As another application,
we also check the Ooguri-Vafa Integrality Conjecture \cite{Oog-Vaf} for one-point functions in arbitrary framing.
We present a closed formula for holomorphic disc numbers and a closed formula for some integral invariants
in arbitrary genera.

Another application of our results is that it leads to a proof of the full Mari\~no-Vafa Conjecture
that identifies open string invariants of the resolved conifold with the link invariants of the unknot,
i.e., the quantum dimensions.
The details will appear in a forthcoming paper.

The next step in establishing the local mirror symmetry in the BKMP Conjecture
is to compute the two-point functions in genus zero
and relate it to the Bergman kernel of the mirror curve.
Our method in this paper can also be used to evaluate the two-point functions,
but the combinatorics is much more complicated.
We will address this in a subsequent work.

Note in this paper an important simplifying trick we have exploited
is that after the conversion from the topological vertices to Hodge integrals,
one can choose some parameters suitably so that most of the terms vanish and for the nonvanishing terms
one can use the Mumford's relations.
For more general local Calabi-Yau geometries this may not work anymore.

The rest of the paper is arranged as follows.
In Section 2 we start with the topological vertex computations of open string invariants of
the resolved conifold in general framing,
and show that after conversion to 2-partition Hodge integrals,
some results in \cite{Zho1} can be used to simplify the calculations.
In Section 3 we adapt the proposal in the inner brane case in \cite{Zho4}
to the outer brane case,
and reformulate the partition functions in terms of operators introduced in \cite{Oko-Pan1}.
We indicate the method of evaluating the $n$-point functions by operator manipulations,
and carry out in Section 4 the calculation of one-point functions by this method.
Closed formulas in genus $0$ and in general genera are presented in this section.
As an application,
we discuss in Section 5 the Ooguri-Vafa integrality properties of disc invariants of the resolved conifold
in general framing.
In the final Section 6,
we establish the Aganagic-Vafa Conjecture that relates counting of open string disc invariants to the mirror curve,
and the Aganagic-Klemm-Vafa Conjecture on framing transformation, both in the resolved conifold case.

\vspace{.1in}
{\em Acknowledgements}.
This research is partially supported by two NSFC grants (10425101 and 10631050)
and a 973 project grant NKBRPC (2006cB805905).

\section{Open String Amplitude of the Resolved Conifold with One outer Brane}

In this section we will start with the calculation by topological vertex \cite{AKMV, LLLZ}
of the open string amplitude of the resolved conifold.
We reformulate it by a formula that relates the relevant topological vertex to two-partition Hodge integrals
\cite{Zho1, LLZ2}.
We then show that by choosing the parameters in the Hodge integrals suitably
the computations can be greatly simplified.

\subsection{Open string amplitudes by the topological vertex}
By the theory of the topological vertex \cite{AKMV, LLLZ},
the open string amplitude for the resolved conifold with one outer brane and framing $a$
can be computed as follows:
\be
Z^{(a)}(\lambda; t; \bp)
= \sum_{\mu, \nu, \eta} q^{a \kappa_\mu/2}  C_{(0), \mu, \nu^t}(q) \cdot Q^{|\nu|} \cdot
C_{\nu,(0), (0)}(q) \cdot
\frac{\chi_{\mu}(\eta)}{z_{\eta}\sqrt{-1}^{l(\eta)}} p_{\eta}(x),
\ee
where $q = e^{\sqrt{-1}\lambda}$ and $Q = - e^{-t}$.
The topological vertices here can be rewritten as follows (cf \cite{Zho3}):
\begin{align}
C_{(0), \mu, \nu^t}(q) & = q^{-\kappa_\nu/2} W_{\mu,\nu}(q), &
C_{\nu, (0), (0)}(q) = W_{\nu}(q) = q^{\kappa_\nu/2} W_{\nu^t}(q),
\end{align}
so we have
\be \label{eqn:Za}
Z^{(a)}(\lambda;t;\bp) = \sum_{\mu, \nu, \eta} q^{a \kappa_\mu/2} W_{\mu\nu}(q) Q^{|\nu|}  W_{\nu^t}(q)\cdot
\frac{\chi_{\mu}(\eta)}{z_{\eta}\sqrt{-1}^{l(\eta)}} p_{\eta}(x).
\ee
We will not recall the complicated definitions of $W_{\mu,\nu}(q)$ here,
since we will not use it below to carry out the computations.
Instead,
we will rewrite it in terms of Hodge integrals and use a trick
that greatly simplifies the summations over partitions.

\subsection{Open string amplitudes by two-partition Hodge integrals}

For a pair of partitions $(\mu^+, \mu^-) \in \cP^2_+$ (one of which might be empty),
consider the following  Hodge integrals:
\begin{eqnarray*}
&& G_{\mu^+, \mu^-}(x, y, -x-y) \\
& = & - \frac{\sqrt{-1}^{l(\mu^+)+l(\mu^-)}}{z_{\mu^+} \cdot z_{\mu^-}} \cdot
\sum_{g \geq 0} \lambda^{2g-2+l(\mu^+)+l(\mu^-)} \\
&& \cdot \int_{\Mbar_{g, l(\mu^+)+l(\mu^-)}}
\frac{\Lambda_{g}^{\vee}(x)\Lambda^{\vee}_{g}(y)\Lambda_{g}^{\vee}(-x - y)}
{\prod_{i=1}^{l(\mu^+)} \frac{x}{\mu_i^+} \left(\frac{x}{\mu^+_i} - \psi_i\right)
\prod_{j=1}^{l(\mu^-)} \frac{y}{\mu_i^-}\left(\frac{y}{\mu^-_j} - \psi_{j+l(\mu^+)}\right)} \\
&& \cdot \left[xy(x+y)\right]^{l(\mu^+)+l(\mu^-)-1}
\cdot \prod_{i=1}^{l(\mu^+)} \frac{\prod_{a=1}^{\mu^+_i-1}
\left( \mu^+_iy + a x\right)}{\mu_i^+! x^{\mu_i^+-1}}
\cdot \prod_{i=1}^{l(\mu^-)} \frac{\prod_{a=1}^{\mu^-_i-1}
\left( \mu_i^- x + a y\right)}{\mu_i^-! y^{\mu_i^--1}}.
\end{eqnarray*}
The following formula was conjectured by the author \cite{Zho1} and proved in joint work with
C.-C. Liu and K. Liu \cite{LLZ2}:
\begin{eqnarray} \label{eqn:Zhou}
&& G^{\bullet}(\lambda; x, y, -x-y; p^+,p^-) = R^{\bullet}(\lambda; x, y, -x-y; p^+, p^-),
\end{eqnarray}
where
\begin{eqnarray*}
G^{\bullet}(\lambda; x, y,-x-y; p^+, p^-) & = &
\exp \left(
\sum_{(\mu^+, \mu^-) \in \cP_+^2} G_{\mu^+, \mu^-}(\lambda;x,y, -x-y)p^+_{\mu^+}p^-_{\mu^-}\right), \\
R^{\bullet}(\lambda;x, y, -x-y; p^+, p^-) & = & \sum_{\mu^{\pm},\nu^{\pm}}
\frac{\chi_{\nu^+}(\mu^+)}{z_{\mu^+}} \frac{\chi_{\nu^-}(\mu^-)}{z_{\mu^-}}
q^{(\frac{\kappa_{\nu^+} y}{x}  + \frac{\kappa_{\nu^-} x}{y})\lambda/2}
\cW_{\nu^+, \nu^-}(q) p^+_{\mu^+}p^-_{\mu^-}.
\end{eqnarray*}
Write
\be
G^{\bullet}(\lambda; x, y,-x,y; p^+, p^-) =
\sum_{(\mu^+, \mu^-) \in \cP_+^2} G^\bullet_{\mu^+, \mu^-}(\lambda;x,y, -x-y)p^+_{\mu^+}p^-_{\mu^-},
\ee
then by (\ref{eqn:Zhou}) we get:
\be
W_{\nu^1, \nu^2}(q)
= \sum_{|\mu^i|=|\nu^i|} \prod_{i=1}^2 \chi_{\nu^i}(\mu^i)
\cdot q^{-(\kappa_{\nu^1} w_2/w_1 + \kappa_{\nu^2} w_1/w_2)/2}
G_{\mu^1, \mu^2}^\bullet(\lambda; w_1, w_2, w_3),
\ee
where $w_3 = - w_1 - w_2$.
One can use this formula to rewrite (\ref{eqn:Za}) as follows:
\begin{multline} \label{eqn:ZaInG}
 Z^{(a)}(\lambda;t; \bp)
= \sum_{\mu, \xi, \eta, \epsilon}  q^{(a-w_1/w_2)\kappa_\mu/2}
\chi_{\mu}(\epsilon)
G^\bullet_{\xi\epsilon}(\lambda;w_1,w_2,w_3) \cdot z_{\xi} \\
\cdot Q^{|\xi|} (-1)^{|\xi|-l(\xi)} \cdot G_{\xi}^\bullet(\lambda;-w_1,-w_2, -w_3)
\cdot
\frac{\chi_{\mu}(\eta)}{z_{\eta}\sqrt{-1}^{l(\eta)}} p_{\eta}(x).
\end{multline}
Indeed,
\ben
&& Z^{(a)}(\lambda;t; \bp) \\
& = & \sum_{\mu, \nu, \xi, \eta, \epsilon,\phi}  q^{a\kappa_\mu/2} \chi_{\nu}(\xi) \chi_{\mu}(\epsilon) q^{-(\kappa_\nu w_2/w_1 + \kappa_\mu w_1/w_2)/2}
G^\bullet_{\xi\epsilon}(\lambda;w_1,w_2,w_3) \\
&& \cdot Q^{|\nu|} \chi_{\nu^t}(\phi) q^{-(\kappa_{\nu^t} w_2/w_1) /2} G_{\phi}^\bullet(\lambda;-w_1,-w_2, -w_3)
\cdot
\frac{\chi_{\mu}(\eta)}{z_{\eta}\sqrt{-1}^{l(\eta)}} p_{\eta}(x) \\
& = & \sum_{\mu, \xi, \eta, \epsilon,\phi}  q^{(a-w_1/w_2)\kappa_\mu/2}
\chi_{\mu}(\epsilon)
G^\bullet_{\xi\epsilon}(\lambda;w_1,w_2,w_3) \\
&& \cdot Q^{|\xi|} \sum_{\nu} \chi_{\nu^t}(\phi) \chi_{\nu}(\xi) \cdot G_{\phi}^\bullet(\lambda;-w_1,-w_2, -w_3)
\cdot
\frac{\chi_{\mu}(\eta)}{z_{\eta}\sqrt{-1}^{l(\eta)}} p_{\eta}(x) \\
& = &  \sum_{\mu, \xi, \eta, \epsilon}  q^{(a-w_1/w_2)\kappa_\mu/2}
\chi_{\mu}(\epsilon)
G^\bullet_{\xi\epsilon}(\lambda;w_1,w_2,w_3) \cdot z_\xi \\
&& \cdot Q^{|\xi|} (-1)^{|\xi|-l(\xi)} \cdot G_{\xi}^\bullet(\lambda;-w_1,-w_2, -w_3)
\cdot
\frac{\chi_{\mu}(\eta)}{z_{\eta}\sqrt{-1}^{l(\eta)}} p_{\eta}(x).
\een
In the above we have used the following facts:
\begin{align}
\kappa_{\nu^t} & = - \kappa_{\nu}, &
\chi_{\nu^t}(\phi) & = (-1)^{|\phi|-l(\phi)} \chi_\nu(\phi),
\end{align}
and the following orthogonality relations for characters:
\be
\sum_{\nu} \chi_\nu(\xi)\chi_\nu(\phi) = \delta_{\xi, \phi} z_\xi.
\ee

\subsection{Hodge integrals at special parameters}

In general it is very hard to evaluate $G^\bullet_{\xi\epsilon}(\lambda;w_1,w_2,-w_1-w_2)$.
Fortunately in \cite{Zho1} we notice that $G^\bullet_{\xi\epsilon}(\lambda;w_1,-w_1,0)$
can be easily evaluated.
In fact,
when $y=-x$,
$G_{\mu^+, \mu^-}(\lambda, x, y, -x-y)$ vanishes except for the following three cases.
Case 1.
\ben
&& G_{(n),(0)}(\lambda; x,-x, 0) \\
& = & -\frac{\sqrt{-1}}{n} \sum_{g \geq 0} \lambda^{2g-1} \int_{\Mbar_{g,1}}
\frac{\Lambda_{g}^{\vee}(x)\Lambda^{\vee}_{g}(-x)\Lambda_{g}^{\vee}(0)}
{\frac{x}{n} \left(\frac{x}{n} - \psi_1\right)}
\cdot  \frac{\prod_{a=1}^{n-1} \left( -nx + a x\right)}{n! x^{n-1}} \\
& = & \frac{(-1)^{n-1}}{\sqrt{-1}\lambda n^2} \sum_{g \geq 0} (n\lambda)^{2g}
\int_{\Mbar_{g,1}} \psi_1^{2g-2}\lambda \\
& = & \frac{(-1)^{n-1}}{n} \frac{1}{2\sqrt{-1} \sin (n\lambda/2)} = \frac{(-1)^{n-1}}{n \cdot [n]},
\een
where $[n] = q^{n/2} - q^{-n/2}$.
Here in the second equality we have used Mumford's relations:
\be
\Lambda_g^\vee(x) \Lambda_g^\vee(-x) = (-x)^g.
\ee
Similarly,
\ben
&& G_{(0),(n)}(\lambda; x,-x,0) = \frac{(-1)^{n-1}}{n \cdot [n]}.
\een
Another case where the $G_{\mu, \nu}(\lambda; x, -x, 0)$ may be nonvanishing is
\ben
&& G_{(n),(n)}(\lambda; x, -x) = \frac{1}{n}.
\een
Therefore,
\begin{multline} \label{eqn:G-1}
G^\bullet(\lambda;x, -x, 0; p^+, p^-) \\
= \exp \sum_{n=1}^\infty \biggl( \frac{(-1)^{n-1}}{n \cdot [n]} p_n(x^+)
+ \frac{(-1)^{(n-1)}}{n \cdot [n]} p_n(x^-) + \frac{1}{n} p_n(x^+)p_n(x^-) \biggr).
\end{multline}
One can then easily find
\bea
&& G^\bullet_{(1),(0)}(\lambda;x,-x, 0) =  G^\bullet_{(0),(1)}(\lambda; x,-x, 0) = \frac{1}{[1]}, \\
&& G^\bullet_{(2),(0)}(\lambda;x,-x, 0) =  G^\bullet_{(0),(2)}(\lambda;x,-x, 0) =
-  \frac{1}{2 \cdot [2]}, \\
&& G^\bullet_{(1^2),(0)}(\lambda;x,-x, 0) =  G^\bullet_{(0),(1^2)}(\lambda;x,-x, 0) =  \half \frac{1}{[1]^2}, \\
&& G^\bullet_{(1),(1)}(\lambda;x,-x, 0) = (\frac{1}{[1]^2} + 1),
\eea
etc.
Now we take $w_2 = - w_1$ and so $w_3 = 0$ in (\ref{eqn:ZaInG}):
\begin{multline} \label{eqn:ZaInG2}
 Z^{(a)}(\lambda;t; \bp)
= \sum_{\mu, \xi, \eta, \epsilon}  q^{(a+1)\kappa_\mu/2}
\chi_{\mu}(\epsilon)
G^\bullet_{\xi\epsilon}(\lambda;w_1,-w_1,0) \cdot z_\xi \\
\cdot Q^{|\xi|} (-1)^{|\xi|-l(\xi)} \cdot G_{\xi}^\bullet(\lambda;-w_1,w_1, 0)
\cdot
\frac{\chi_{\mu}(\eta)}{z_{\eta}\sqrt{-1}^{l(\eta)}} p_{\eta}(x).
\end{multline}
Hence one can apply (\ref{eqn:G-1}) to compute $Z^{(a)}(\lambda; t; \bp)$.

\section{Open String Amplitude of the Resolved Conifold by Operator manipulations}

In last section we have seen that the computation of the open string amplitude of the resolved conifold
involves complicated summations over partitions.
In this section we will follow the treatment of the inner brane case in \cite{Zho4} to reformulate it
using the operators in \cite{Oko-Pan1}.

\subsection{Reformulation in terms of symmetric functions}

We now rewrite the open string amplitude as the vacuum expectation value on the space $\Lambda$ of symmetric functions \cite{Mac}.
Recall the Newton functions $\{p_\mu\}$ and the Schur functions $\{s_\nu\}$ form additive bases of $\Lambda$,
and they are related by the character values of the symmetric groups:
\begin{align}
p_{\mu} & = \sum_{\nu} \chi_{\nu}(\mu) s_{\nu}, & s_{\nu} & =  \sum_\mu \frac{\chi_\nu(\mu)}{z_\mu} p_\mu.
\end{align}
Under the natural scalar product on $\Lambda$ one has:
\begin{align}
\langle p_\mu, p_\nu\rangle & = \delta_{\mu, \nu} z_{\mu}, &
\langle s_\mu, s_\nu\rangle & = \delta_{\mu, \nu}.
\end{align}
In particular,
\be
\langle p_\mu, s_\nu \rangle = \chi_\nu(\mu).
\ee
On $\Lambda$ one can introduce the following operators for nonzero integers $n$:
\be
\beta_n: \Lambda \to \Lambda, \qquad \beta_n (f) = \begin{cases}
p_{-n} \cdot f, & n < 0, \\
n \frac{\pd}{\pd p_n} f, & n > 0.
\end{cases}
\ee
These operators satisfy the following commutation relations:
\be
[\beta_m, \beta_n ] = m \delta_{m, -n} \Id_{\Lambda}.
\ee

Another useful operator on $\Lambda$ is the cut-and-join operator
\begin{align}
K: & = \half \sum_{i,j=1}^\infty \big(p_{i+j} ij\frac{\pd^2}{\pd p_i\pd p_j}
+ p_ip_j (i+j) \frac{\pd}{\pd p_{i+j}} \big) \\
& = \half \sum_{i,j =1}^\infty (\beta_{-(i+j)} \beta_i\beta_j + \beta_{-i}\beta_{-j} \beta_{i+j}). \nonumber
\end{align}
The Schur functions are eigenvectors of this operator:
\be
K s_\mu = \half \kappa_\mu s_\mu.
\ee

For an linear operator $A: \Lambda \to \Lambda$,
It vacuum expectation value is
\be
\cor{A} = \lvac A \vac,
\ee
where $\vac = 1 \in \Lambda$.
One clearly has
\be
\beta_n \vac = 0, \qquad n > 0.
\ee
For a partition $\mu = (\mu_1, \dots, \mu_l)$,
we write
\begin{align}
\beta_\mu & = \prod_{i=1}^l \beta_{\mu_i}, & \beta_{-\mu} & = \prod_{i=1}^l \beta_{-\mu_i}.
\end{align}
It is easy to see that
\be
\cor{ \beta_\mu \beta_{-\nu}} = \delta_{\mu, \nu} z_{\mu}.
\ee
It follows from this identity that
\be
\cor{\exp (\sum_{n=1} \frac{a_n}{n} \beta_n) \cdot \exp ( \sum_{n=1}^\infty
\frac{b_n}{n} \beta_{-n} ) }
= \exp \sum_{n=1}^\infty \frac{a_nb_n}{n}.
\ee

Using an idea from \cite{Zho2},
we introduce two sets $\Lambda$ and $\tilde{\Lambda}$ of symmetric functions,
$\tilde{\Lambda}$ for the internal edge in Figure 1,
$\Lambda$ for the external edge where the outer brane is located in Figure 1.
Hence we operators $\{\beta_n\}$ and $\{\tilde{\beta}_n\}$ acting on $\Lambda$ and $\tilde{\Lambda}$ respectively.
In physical terminology,
we introduce two systems of free bosons.

With the above preparations,
we now reformulate (\ref{eqn:ZaInG2}) in terms of the vacuum expectation value of an operator
on $\Lambda \otimes \tilde{\Lambda}$:
\ben
Z^{(a)}(\lambda;t; \bp)
& = & \langle \exp (\sum_{n=1}^\infty \frac{p_n(x)}{ni} \beta_n) q^{(a+1)K}
\sum_{\xi, \epsilon} G^\bullet_{\xi\epsilon}(\lambda;w_1,-w_1,0) \beta_{-\epsilon} \tilde{\beta}_{\xi} \\
&& \cdot \sum_{\eta} Q^{|\eta|} (-1)^{|\eta|-l(\eta)} \cdot G_{\eta}^\bullet(\lambda;-w_1,w_1, 0) \tilde{\beta}_{-\eta} \rangle.
\een
Now by definition of $G^\bullet$ and (\ref{eqn:G-1}) we have:
\ben
&& \sum_{\xi, \epsilon} G^\bullet_{\xi\epsilon}(\lambda;w_1,-w_1,0) \beta_{-\epsilon} \tilde{\beta}_{\xi}
=  \exp \sum_{n=1}^\infty \biggl( \frac{(-1)^{n-1}}{n \cdot [n]} \beta_{-n}
+ \frac{(-1)^{(n-1)}}{n \cdot [n]} \tilde{\beta}_n + \frac{1}{n} \beta_{-n} \tilde{\beta}_n \biggr), \\
&&  \sum_{\eta} Q^{|\eta|} (-1)^{|\eta|-l(\eta)} \cdot G_{\eta}^\bullet(\lambda;-w_1,w_1, 0) \tilde{\beta}_{-\eta}
= \exp \biggl( \sum_{n=1}^\infty \frac{Q^n}{n \cdot [n]} \tilde{\beta}_{-n}  \biggr),
\een
so we get:
\ben
&& Z^{(a)}(\lambda;t; \bp)
= \langle \exp (\sum_{n=1}^\infty \frac{p_n(x)}{ni} \beta_n) q^{(a+1)K} \\
&& \cdot \exp \sum_{n=1}^\infty \biggl( \frac{(-1)^{n-1}}{n \cdot [n]} \beta_{-n}
+ \frac{(-1)^{(n-1)}}{n \cdot [n]} \tilde{\beta}_n + \frac{1}{n} \beta_{-n} \tilde{\beta}_n \biggr)
\cdot \exp \biggl( \sum_{n=1}^\infty \frac{Q^n}{n \cdot [n]} \tilde{\beta}_{-n}  \biggr) \rangle \\
& = & \langle \exp (\sum_{n=1}^\infty \frac{p_n(x)}{ni} \beta_n) q^K
\exp \sum_{n=1}^\infty \biggl( \frac{(-1)^{n-1}}{n \cdot [n]} + \frac{Q^n}{n \cdot [n]} \biggr) \beta_{-n}\rangle
\cdot \exp \biggl( \sum_{n=1}^\infty \frac{(-1)^{n-1}Q^n}{n \cdot [n]^2} \biggr).
\een
When all $p_n(x) = 0$,
one gets the closed string partition function:
\be
Z(\lambda;t) := Z^{(a)}(\lambda;t;\bp)|_{p_n = 0} = \exp \biggl( \sum_{n=1}^\infty \frac{(-1)^{n-1}Q^n}{n \cdot [n]^2} \biggr).
\ee
The normalized open string amplitude is defined by:
\be
\hat{Z}^{(a)}(\lambda; t; \bp)
= \frac{Z^{(a1)}(\lambda; t; \bp)}{Z(\lambda;t)}.
\ee
To summarize,
we have the following:

\begin{proposition}
The normalized open string amplitude of the resolved conifold with one outer brane and framing $a \in \bZ$
is given by:
\be \label{eqn:ZaOperator1}
\hat{Z}^{(a)}(\lambda;t; \bp)
=  \langle \exp (\sum_{n=1}^\infty \frac{p_n(x)}{ni} \beta_n) q^{(a+1)K}
\exp \biggl( \sum_{n=1}^\infty \big( \frac{(-1)^{n-1}}{[n]} + \frac{Q^n}{[n]} \big) \frac{\beta_{-n}}{n} \biggr) \rangle.
\ee
\end{proposition}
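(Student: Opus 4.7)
The plan is to start from equation \eqref{eqn:ZaInG2}, which already expresses $Z^{(a)}$ as a triple partition sum involving the specialized two-partition generating functions $G^\bullet_{\xi\epsilon}(\lambda;w_1,-w_1,0)$, and to repackage every remaining sum as a vacuum expectation value on $\Lambda\otimes\tilde\Lambda$. The basic mechanism is the pairing $\cor{\beta_\mu\beta_{-\nu}}=\delta_{\mu,\nu}z_\mu$, together with the standard bosonic contraction formula $\cor{\exp(\sum a_n\beta_n/n)\exp(\sum b_n\beta_{-n}/n)}=\exp(\sum a_nb_n/n)$; these convert each generating-function factor into an exponential in creation/annihilation operators.

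First I would translate each factor of \eqref{eqn:ZaInG2} into operator form. The observable $\sum_\eta\tfrac{\chi_\mu(\eta)}{z_\eta\sqrt{-1}^{l(\eta)}}p_\eta(x)$ summed against the $\chi_\mu(\epsilon)\beta_{-\epsilon}$ piece becomes the creation-side exponential $\exp(\sum_n\tfrac{p_n(x)}{ni}\beta_n)$ once one uses the change-of-basis formulas from $p_\mu$ to $s_\nu$; the framing factor $q^{(a+1)\kappa_\mu/2}$ is inserted as the operator $q^{(a+1)K}$ by virtue of $Ks_\mu=\tfrac12\kappa_\mu s_\mu$; the internal-edge sum $\sum_{\xi,\epsilon}G^\bullet_{\xi\epsilon}(\lambda;w_1,-w_1,0)\beta_{-\epsilon}\tilde\beta_\xi$ is, by definition of $G^\bullet$ as an exponential generating series and by the explicit evaluation \eqref{eqn:G-1}, the exponential of the linear-plus-bilinear combination $\sum_n\bigl(\tfrac{(-1)^{n-1}}{n[n]}\beta_{-n}+\tfrac{(-1)^{n-1}}{n[n]}\tilde\beta_n+\tfrac1n\beta_{-n}\tilde\beta_n\bigr)$; and the $Q$-side sum collapses, again by \eqref{eqn:G-1}, to $\exp(\sum_n\tfrac{Q^n}{n[n]}\tilde\beta_{-n})$, after the sign $(-1)^{|\eta|-l(\eta)}$ cancels the $(-1)^{n-1}$ inside $G^\bullet_{(n)}$.

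The remaining work is to eliminate the $\tilde\Lambda$-system from the vacuum expectation value. I would commute the creation-side factor involving $\tilde\beta_n$ (which appears both linearly and in the coupling $\beta_{-n}\tilde\beta_n$) past $\exp(\sum_n\tfrac{Q^n}{n[n]}\tilde\beta_{-n})$, using $[\tilde\beta_m,\tilde\beta_{-m}]=m\,\Id$. The scalar produced by the linear--linear contraction is $\exp\sum_n\tfrac{(-1)^{n-1}Q^n}{n[n]^2}$, which by definition is $Z(\lambda;t)$; the bilinear--linear contraction between $\beta_{-n}\tilde\beta_n$ and $\tilde\beta_{-n}$ injects an additional $\tfrac{Q^n}{n[n]}\beta_{-n}$ into the surviving exponent on $\Lambda$, which combines with the already-present $\tfrac{(-1)^{n-1}}{n[n]}\beta_{-n}$ to produce exactly the coefficient $\tfrac{1}{n}\bigl(\tfrac{(-1)^{n-1}}{[n]}+\tfrac{Q^n}{[n]}\bigr)$ appearing in \eqref{eqn:ZaOperator1}. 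Dividing by $Z(\lambda;t)$ then yields the proposition.

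The main obstacle is not conceptual but bookkeeping: one must verify that the four exponentials (observable, framing operator, mixed $G^\bullet$-exponential, and $Q$-factor) normal-order correctly, so that the $\tilde\beta$-system contributes \emph{exactly} $Z(\lambda;t)$ times the substitution just described, with no residual $\tilde\beta$-dependence and with the correct signs and factors of $[n]$. This reduces to a routine Heisenberg-algebra Baker--Campbell--Hausdorff computation, and once it is carried out the formula \eqref{eqn:ZaOperator1} follows.
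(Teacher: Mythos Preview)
Your proposal is correct and follows essentially the same route as the paper: starting from \eqref{eqn:ZaInG2}, introducing the two commuting boson systems on $\Lambda\otimes\tilde\Lambda$, using \eqref{eqn:G-1} to rewrite the $G^\bullet$-sums as exponentials, and then contracting out the $\tilde\beta$-modes via the Heisenberg commutation relations to extract the closed-string factor $Z(\lambda;t)$ and the combined creation coefficient $\tfrac{(-1)^{n-1}}{[n]}+\tfrac{Q^n}{[n]}$. The only difference is presentational---the paper simply writes down the two operator identities for $\sum G^\bullet\beta_{-\epsilon}\tilde\beta_\xi$ and the $Q$-side and evaluates the $\tilde\Lambda$-vacuum expectation in one line, whereas you phrase the elimination as a BCH/normal-ordering computation---but the underlying argument is identical.
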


\begin{corollary}
The normalized open string amplitude of the resolved conifold with one outer brane and framing $-1$
is given by:
\be \label{eqn:ZaOperator-1}
\hat{Z}^{(-1)}(\lambda;t; \bp)
= \exp \biggl( \sum_{n=1}^\infty \big( \frac{(-1)^{n-1}}{[n]} + \frac{Q^n}{[n]} \big) \frac{p_n(x)}{ni} \biggr).
\ee
\end{corollary}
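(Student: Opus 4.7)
The plan is to specialize equation (\ref{eqn:ZaOperator1}) to $a = -1$ and apply the basic vacuum expectation value identity for exponentials of creation and annihilation operators that was recorded just before the proposition.

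First I would observe that at $a = -1$ the Gaussian-type middle factor collapses: $q^{(a+1)K} = q^{0 \cdot K} = \Id_{\Lambda \otimes \tilde{\Lambda}}$. Consequently (\ref{eqn:ZaOperator1}) becomes simply
\begin{equation*}
\hat{Z}^{(-1)}(\lambda;t;\bp) = \left\langle \exp \Bigl(\sum_{n=1}^\infty \frac{p_n(x)}{ni} \beta_n\Bigr) \exp \Bigl( \sum_{n=1}^\infty \bigl( \tfrac{(-1)^{n-1}}{[n]} + \tfrac{Q^n}{[n]} \bigr) \frac{\beta_{-n}}{n} \Bigr) \right\rangle.
\end{equation*}

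Next I would match this against the identity
\begin{equation*}
\cor{\exp (\sum_{n=1} \tfrac{a_n}{n} \beta_n) \cdot \exp ( \sum_{n=1}^\infty \tfrac{b_n}{n} \beta_{-n} ) } = \exp \sum_{n=1}^\infty \frac{a_nb_n}{n},
\end{equation*}
taking $a_n = p_n(x)/i$ and $b_n = (-1)^{n-1}/[n] + Q^n/[n]$. Substituting these values into the right-hand side gives exactly the formula (\ref{eqn:ZaOperator-1}) claimed in the corollary.

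There is essentially no obstacle: the only thing to verify carefully is that the operators $\tilde{\beta}_n$ on the auxiliary Fock space $\tilde{\Lambda}$ play no role once $q^{(a+1)K}$ disappears, because then the $\tilde{\beta}$'s in the exponentials on the left and the right are not mixed by any intervening operator and the factors $\exp(\sum_n c_n \tilde\beta_n/n)$ and $\exp(\sum_n d_n \tilde\beta_{-n}/n)$ contribute their own vacuum expectation $\exp \sum_n c_n d_n/n$, which however has already been absorbed in the closed-string partition function $Z(\lambda;t)$ used to normalize. A brief bookkeeping check that this $\tilde{\Lambda}$-contribution precisely cancels the normalization $Z(\lambda;t)$ completes the argument.
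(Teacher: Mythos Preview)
Your proof is correct and follows exactly the approach the paper intends: specialize (\ref{eqn:ZaOperator1}) to $a=-1$ so that $q^{(a+1)K}$ becomes the identity, and then apply the vacuum expectation identity $\cor{\exp(\sum a_n\beta_n/n)\exp(\sum b_n\beta_{-n}/n)}=\exp\sum a_nb_n/n$.

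One small redundancy: your final paragraph about the $\tilde{\beta}_n$ operators is unnecessary. By the time Proposition~3.1 is stated, the $\tilde{\Lambda}$ sector has already been integrated out in the derivation preceding the proposition, and the resulting factor $\exp\bigl(\sum_n (-1)^{n-1}Q^n/(n[n]^2)\bigr)=Z(\lambda;t)$ has already been divided off in passing from $Z^{(a)}$ to $\hat{Z}^{(a)}$. So (\ref{eqn:ZaOperator1}) lives purely on $\Lambda$, the operator $K$ there is the cut-and-join operator on $\Lambda$ alone, and no further bookkeeping with $\tilde{\beta}$ is needed.
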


From now on we will only consider the case when $a \neq -1$.

\subsection{Reformulation in terms of other operators}

When the framing $a \neq -1$,
we will follow the approach in \cite{Zho4} for the inner brane case to evaluate the vacuum expectation
values.
First notice that,
\ben
&& \hat{Z}^{(a)}(\lambda;t; \bp) \\
& = & \langle \exp (\sum_{n=1}^\infty \frac{p_n(x)}{ni} \beta_n) e^{i(a+1)\lambda K}
\exp \biggl( \sum_{n=1}^\infty \big( \frac{(-1)^{n-1}}{n \cdot [n]} + \frac{Q^n}{n \cdot [n]} \big) \beta_{-n} \biggr) e^{-i(a+1)\lambda K} \rangle.
 \\
& = & \langle \exp (\sum_{n=1}^\infty \frac{p_n(x)}{ni} \beta_n)
\exp \biggl( \sum_{n=1}^\infty \big( \frac{(-1)^{n-1}}{n \cdot [n]} + \frac{Q^n}{n \cdot [n]} \big)
e^{i(a+1)\lambda K}\beta_{-n}  e^{-i(a+1)\lambda K}\biggr) \rangle.
\een
By \cite[(2.14)]{Oko-Pan2},
\be
e^{uK}\beta_{-m}e^{-uK} = \cE_{-m}(um).
\ee
The operators $\cE_r(z)$ were defined in \cite{Oko-Pan1} as follows:
\be
\cE_r(z) = \sum_{k\in \bZ + \half} e^{z(k-\frac{r}{2})} E_{k-r, k} + \frac{\delta_{r,0}}{\varsigma(z)},
\ee
where the function $\varsigma(z)$ is defined by
\be
\varsigma(z) = e^{z/2} -e^{-z/2}.
\ee
The operators $\cE_r$ satisfy
\be
\cE_r(z)^* = \cE_{-r}(z)
\ee
and
\be
[\cE_a(z), \cE_b(w)] = \varsigma(aw-bz) \cE_{a+b}(z+w). \label{eqn:EE}
\ee
These operators were originally defined in the fermionic picture,
and by the boson-fermion correspondence \cite{Miw-Jim-Dat},
they also act on the bosonic Fock space,
which we take as the space $\Lambda$ of symmetric functions.
In the fermionic picture the cut-and-join operator is the operator $\cF_2$ defined by
\be
\cF_2 = \half \sum_{k\in \bZ + \frac{1}{2}} k^2 E_{kk}.
\ee

Now we can rewrite (\ref{eqn:ZaOperator1}) as follows:
\begin{multline} \label{eqn:ZaOperator2}
\hat{Z}^{(a)}(\lambda; t; \bp)
= \langle \exp \big( \sum_{n=1}^\infty \frac{p_n(x)}{i n} \beta_n\big) \\
\cdot \exp \big( \sum_{n=1}^\infty \frac{1}{n} (\frac{(-1)^{n-1}}{[n]}
+ \frac{Q^n}{[n]}) \cE_{-n}(i(a+1)n\lambda) \big) \rangle.
\end{multline}

\subsection{Operator manipulations}

For $k \neq 0$,
\be
\beta_k = \cE_k(0).
\ee
The commutation relation (\ref{eqn:EE}) specializes to
\be \label{eqn:BetaE}
[\beta_k, \cE_l(z)]= \varsigma(kz)\cE_{k+l}(z).
\ee
Starting from (\ref{eqn:BetaE}),
one can easily prove by induction:
\bea
&& \beta_k^m \cE_{-n}(z) = \sum_{j=0}^m \begin{pmatrix} m \\ j \end{pmatrix}
\varsigma(kz)^j \cE_{-n+jk}(z) \beta_k^{m-j}.
\eea
It then follows that
\bea
&& \exp (\frac{a_k}{k} \beta_k) \cdot \cE_{-n}(z)
= \sum_{j=0}^{\infty} \frac{a_k^j}{j!k^j}\varsigma(kz)^j
\cE_{-n+kj}(z) \cdot \exp (\frac{a_k}{k} \beta_k).
\eea
Repeating these formulas for all $k$,
one gets the following identity \cite{Zho4}:
\be
\exp (\sum_{k=1}^{\infty} \frac{a_k}{k} \beta_k) \cdot \cE_{-n}(z)
 \cdot \exp (\sum_{k=1}^{\infty} \frac{a_k}{k} \beta_k)
= \sum_{\mu} \frac{\tilde{a}_{\mu}(z)}{z_{\mu^+}}
\cE_{-n+|\mu|}(z),
\ee
where
\be
\tilde{a}_{\mu}(z) = \prod_k (a_k \varsigma(kz))^{m_k(\mu)}.
\ee
By (\ref{eqn:ZaOperator2}) we have
\begin{multline} \label{eqn:ZaOperator3}
\hat{Z}^{(a)}(\lambda; t; \bp)
= \langle  \exp \big( \sum_{n=1}^\infty \frac{1}{n} (\frac{(-1)^{n-1}}{[n]}
+ \frac{Q^n}{[n]})\\
\cdot  \exp \big( \sum_{m=1}^\infty \frac{p_m(x)}{i m} \beta_m\big) \cdot
\cE_{-n}(i(a+1)n\lambda) \cdot \exp \big( -\sum_{m=1}^\infty \frac{p_m(x)}{i m} \beta_m\big)\big) \rangle.
\end{multline}
Hence one can reduce the calculation of $\hat{Z}^{(a)}(\lambda;t;\bp)$ to
the computations of the correlators:
\be \label{eqn:ECorrelators}
\cor{\cE_{a_1}(b_1) \cdots \cE_{a_n}(b_n)}.
\ee
This approach was proposed for the inner brane case in \cite{Zho4}.

To evaluate the correlators (\ref{eqn:ECorrelators}),
notice that
\be
\cE_0(z) \vac = \varsigma(z) \; \vac, \qquad
\cE_n(z) \vac = 0, \qquad n > 0.
\ee
Therefore,
\be
\cor{\cE_{a_1}(b_1) \cdots \cE_{a_n}(b_n)} = 0
\ee
for $a_n > 0$ or $a_1 < 0$.
One uses (\ref{eqn:EE}) to reduce the correlators to the correlators of the form:
\be
\cor{ \cE_0(b_1) \cdots \cE_0(b_n)}
= \varsigma(b_1) \cdots \varsigma(b_n).
\ee
For example,
one can get in this fashion:

\begin{lemma}
Suppose $m_1, \dots, m_l$ is a partition of $n > 0$,
and $a_1, \dots, a_l$ are arbitrary numbers,
then the following identity holds:
\be
\cor {\beta_n \cE_{-m_1}(a_1i\lambda) \cdots \cE_{-m_l}(a_li\lambda)}
= \frac{1}{[\sum_{j=1}^l a_j]} \prod_{j=1}^l [d_j],
\ee
where $d_1 = n a_1$, and for $j > 1$,
\be
d_j = \begin{vmatrix}
n - \sum_{k=1}^{j-1} m_j & - m_j \\ \sum_{k=1}^{k-1} a_k & a_j
\end{vmatrix}.
\ee
\end{lemma}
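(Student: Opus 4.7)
The plan is to commute the leftmost operator—starting with $\beta_n = \cE_n(0)$—successively to the right using the commutation relation (\ref{eqn:EE}). At each stage, one of the two terms produced by the commutator identity vanishes for a trivial reason, which yields a clean linear recursion.

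To carry this out, set $z_j := a_j i \lambda$ and $X_j := \cE_{-m_j}(z_j)$. Since $\beta_n = \cE_n(0)$, relation (\ref{eqn:EE}) gives $\cE_n(0) X_1 = X_1 \cE_n(0) + \varsigma(n z_1) \cE_{n - m_1}(z_1)$, so
\[
\cor{\beta_n X_1 X_2 \cdots X_l} = \cor{X_1 \beta_n X_2 \cdots X_l} + \varsigma(n z_1) \cor{\cE_{n-m_1}(z_1) X_2 \cdots X_l}.
\]
The first correlator vanishes for a general reason: its leftmost operator $X_1 = \cE_{-m_1}(z_1)$ has negative index, and the adjoint identity $\cE_r(z)^* = \cE_{-r}(z)$ together with $\cE_{m_1}(z_1) \vac = 0$ imply $\lvac \cE_{-m_1}(z_1) = 0$. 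Iterating, I commute the new leading operator $\cE_{n - m_1}(z_1)$ past $X_2$ to obtain $X_2 \cE_{n - m_1}(z_1) + \varsigma((n - m_1)z_2 + m_2 z_1) \cE_{n - m_1 - m_2}(z_1 + z_2)$, and the $X_2$-led correlator again vanishes by the same principle.

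Writing $M_j := \sum_{k \leq j} m_k$ and $A_j := \sum_{k \leq j} a_k$, an easy induction shows that after $j$ applications the leading operator is $\cE_{n - M_j}(A_j i\lambda)$, and the $j$-th step multiplies the overall prefactor by
\[
\varsigma\bigl(((n - M_{j-1}) a_j + m_j A_{j-1}) i\lambda\bigr) = [(n - M_{j-1}) a_j + m_j A_{j-1}] = [d_j],
\]
which is precisely the expansion of the $2 \times 2$ determinant defining $d_j$. After $l$ steps the leading operator is $\cE_{n - M_l}(A_l i\lambda) = \cE_0(A_l i\lambda)$, whose vacuum expectation is $1/\varsigma(A_l i\lambda) = 1/[\sum_j a_j]$; collecting all factors delivers the claimed identity. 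The one bookkeeping point to keep in mind is the convention $\cor{\cE_0(u)} = 1/\varsigma(u)$, which is what makes the base case $l = 1$ reduce to $[n a_1]/[a_1]$; beyond this, the argument is a single linear recursion and there is no substantive obstacle.
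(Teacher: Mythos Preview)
Your proof is correct and follows exactly the operator-manipulation scheme the paper outlines immediately before the lemma (``one uses (\ref{eqn:EE}) to reduce the correlators\ldots''); the paper leaves the details implicit, and you have carried them out cleanly. One remark: the paper's displayed relation $\cE_0(z)\vac = \varsigma(z)\,\vac$ and $\cor{\cE_0(b_1)\cdots\cE_0(b_n)} = \varsigma(b_1)\cdots\varsigma(b_n)$ appear to be typos for $1/\varsigma$, since only your convention $\cor{\cE_0(u)} = 1/\varsigma(u)$ is consistent with the statement of the lemma and its corollary (and with \cite{Oko-Pan1}).
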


\begin{corollary}
Suppose that $m_1, \dots, m_l$ is a partition of $n > 0$,
then the following identity holds:
\be \label{eqn:BetaCorrelator}
\cor {\beta_n \cE_{-m_1}((a+1)m_1i\lambda) \cdots \cE_{-m_l}((a+1)m_li\lambda)}
= \frac{1}{[(a+1)n]} \prod_{j=1}^l [(a+1)n m_j].
\ee
\end{corollary}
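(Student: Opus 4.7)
The plan is to deduce the Corollary directly as a specialization of the preceding Lemma, namely the substitution $a_j = (a+1) m_j$ for $j = 1, \dots, l$. With this choice, the prefactor becomes
$$\frac{1}{\big[\sum_{j=1}^{l} a_j\big]} = \frac{1}{\big[(a+1)\sum_{j=1}^{l} m_j\big]} = \frac{1}{[(a+1)n]},$$
using the hypothesis that $m_1 + \cdots + m_l = n$. This already matches the prefactor on the right-hand side of the Corollary.

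It remains to verify that $[d_j] = [(a+1) n m_j]$ for every $j$. For $j = 1$ this is immediate: $d_1 = n a_1 = (a+1) n m_1$. For $j > 1$, expanding the $2 \times 2$ determinant in the Lemma and plugging in $a_k = (a+1) m_k$ gives
$$d_j = \left(n - \sum_{k=1}^{j-1} m_k\right) a_j + m_j \sum_{k=1}^{j-1} a_k = (a+1) m_j \left[\left(n - \sum_{k=1}^{j-1} m_k\right) + \sum_{k=1}^{j-1} m_k\right] = (a+1) n m_j,$$
so the partial sums of the $m_k$ telescope away. Taking the product over $j$ and combining with the prefactor yields the claimed formula.

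I expect no real obstacle: the Corollary is a purely numerical specialization of the Lemma, and the only substantive step is observing that the specific choice $a_k = (a+1) m_k$ is precisely what forces the determinants to collapse to $(a+1) n m_j$, independent of $j$ beyond the explicit $m_j$ factor.
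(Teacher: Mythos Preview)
Your proof is correct and follows exactly the intended route: the Corollary is stated immediately after the Lemma in the paper with no separate proof, so the specialization $a_j = (a+1)m_j$ together with the observation that the determinants collapse to $(a+1)nm_j$ is precisely what the paper has in mind.
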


\subsection{The open string free energy and the $n$-point functions}
Write
\be
F^{(a)}(\lambda; t; \bp)
: = \log \hat{Z}^{(a)}(\lambda; t; \bp).
\ee
It will be referred to as the open string free energy of the resolved conifold with frame $a$.
Write
\be
F^{(a)}(\lambda; t; \bp) = \sum_{n=1}^\infty \frac{1}{n!}
\sum_{m_1, \dots, m_n \geq 1}  F^{(a)}_{m_1, \dots, m_n} (\lambda; t) p_{m_1} \cdots p_{m_n},
\ee
where the coefficients $F^{(a)}_{m_1, \dots, m_n} (\lambda; t)$,
which are symmetric in $m_1, \dots, m_n$,
are called the $n$-point functions.
Write
\be
F^{(a)}_{m_1, \dots, m_n} (\lambda; t) = \sum_{g=0}^\infty \lambda^{2g-2+n} F_{g; m_1, \dots, m_n}^{(a)}(t),
\ee
and
\be
\Phi^{(a)}_{g,n}(\bp) : =  \frac{1}{n!} \sum_{m_1, \dots, m_n \geq 1}F_{g; m_1, \dots, m_n}^{(a)}(t)
p_{m_1} \cdots p_{m_n}.
\ee
The symmetrization of $\Phi^{(a)}_{g,n}(\bp)$ is defined by:
\be
\Psi^{(a)}_{g,n}(t; x_1, \dots, x_n) : =  \sum_{m_1, \dots, m_n \geq 1}F_{g; m_1, \dots, m_n}^{(a)}(t)
x_1^{m_1} \cdots x_n^{m_n}.
\ee
We also set
\be
\Psi^{(a)}_n(\lambda; t; x_1, \dots, x_n)
= \sum_{g \geq 0} \lambda^{2g-2+n} \Psi^{(a)}_{g,n}(t; x_1, \dots, x_n).
\ee

\section{One-Point Functions}

In this section we will use the method in last section to find closed formula for one-point functions
for open string invariants.

\subsection{Preliminary results for one-point functions}

It is easy to see that the one-point functions are given by:
\ben
&& \sum_{n=1}^\infty F^{(a)}_n(\lambda; t) p_n(x) \\
& = & \lvac \sum_{n=1}^\infty \frac{p_n(x)}{n i} \beta_n \cdot
\exp \big( \sum_{n=1}^\infty \frac{1}{n} (\frac{(-1)^{n-1}}{[n]}
+ \frac{Q^n}{[n]}) \cE_{-n}(i(a+1)n\lambda) \big) \vac \\
& = & \sum_{n=1}^\infty \frac{p_n(x)}{n i}
\sum_{m_1 \cdot 1 + \cdots + m_l \cdot l = n} \prod_{j=1}^l
\frac{(\frac{(-1)^{j-1}}{[j]} + \frac{Q^j}{[j]})^{m_j}}{j^{m_j}m_j!} \cdot
\cor{ \beta_n \prod_{j=1}^l \cE_{-j}(i(a+1)j\lambda)^{m_j}}. \
\een
Now by (\ref{eqn:BetaCorrelator}),
we get:
\begin{multline} \label{eqn:F(x1)}
\sum_{n \geq 1} F_n^{(a)}(\lambda; t) p_n(x) \\
= \sum_{n=1}^\infty \frac{p_n(x)}{n i}
\sum_{m_1 \cdot 1 + \cdots + m_l \cdot l = n} \prod_{j=1}^l
\frac{(\frac{(-1)^{j-1}}{[j]} + \frac{Q^j}{[j]})^{m_j}}{j^{m_j}m_j!} \cdot
\frac{1}{[(a+1)n]} \prod_{j=1}^l [(a+1)jn]^{m_j}.
\end{multline}
After symmetrization we get:
\begin{multline}
\Psi^{(a)}_1(\lambda; t; x)
= \sum_{n=1}^\infty \frac{x^n}{n i \cdot [(a+1)n]} \\
\cdot \sum_{\sum_{j=1}^l j m_j = n} \prod_{j=1}^l
\frac{\big((\frac{(-1)^{j-1}}{[j]} + \frac{Q^j}{[j]}) \cdot  [(a+1)jn] \big)^{m_j}}{j^{m_j}m_j!}.
\end{multline}

The following are the first few terms:
\ben
&& F^{(a)}_1(\lambda; t) = \frac{1}{[1]}+ \frac{Q}{[1]}.
\een
\ben
F^{(a)}_2(\lambda; t)
& = &  \frac{1}{4} ( - \frac{1}{[2]}+ \frac{Q^2}{[2]} ) \frac{[4(a+1)]}{[2(a+1)]}
+ \frac{1}{4} ( \frac{1}{[1]}+ \frac{Q}{[1]} )^2 [2(a+1)].
\een
\begin{multline*}
F^{(a)}_3(\lambda; t) = \frac{1}{3} \cdot ( \frac{1}{3[3]} + \frac{Q^3}{3[3]} ) \frac{[9(a+1)]}{[3(a+1)]} \\
+ \frac{1}{3} \cdot \frac{1}{2} ( \frac{1}{[1]}+ \frac{Q}{[1]} )
( - \frac{1}{[2]}+ \frac{Q^2}{[2]}) \frac{[6(a+1)][3(a+1)]}{[3(a+1)]}
+ \frac{1}{3} \cdot \frac{1}{3!} ( \frac{1}{[1]}+ \frac{Q}{[1]} )^3 \frac{[3(a+1)]^3}{[3(a+1)]}.
\end{multline*}
In particular,
when $a=0$,
we have
\ben
&& F^{(0)}_1(\lambda; t) = \frac{1}{[1]}+ \frac{Q}{[1]}, \\
&& F^{(0)}_2(\lambda;t) = \frac{1}{2[2]} + \frac{[2]}{2[1]^2} Q + \frac{[3]}{2[1][2]}Q^2, \\
&& F^{(0)}_3(\lambda; t) = \frac{1}{3[3]} + \frac{[3]}{3 [1]^2} Q + \frac{[3][4]}{3[1]^2[2]}Q^2
+ \frac{[4][5]}{3[1][2][3]}Q^3.
\een
We will present below a method to take the summations on the right-hand side of (\ref{eqn:F(x1)}).
It will provide us explicit expressions for genera $F^{(a)}_n(\lambda;t)$.

\subsection{The  genus $0$ case} \label{sec:g=0}
To motivate our method,
we will first treat the $g=0$ case,
where our method is elementary.
By (\ref{eqn:F(x1)}),
we get:
\be \label{eqn:Psi(g=0)}
\Psi_{0,1}^{(a)}(t; x)
= -  \sum_{n=1}^\infty \frac{x^n}{n^2(a+1)}
\sum_{m_1 \cdot 1 + \cdots + m_l \cdot l = n} \prod_{j=1}^l
\frac{ [n(a+1)((-1)^{j-1} + Q^j) ]^{m_j}}{j^{m_j}m_j!}.
\ee
The following are the first few terms:
\ben
-\Psi^{(a)}_{0,1}(t; x)  & = & (1 + Q) x + \frac{1}{4} ((2a+1)+4(a+1)Q+(2a+3)Q^2) x^2 \\
& + & \frac{1}{9} (1+\frac{9a(a+1)}{2} + \frac{9(a+1)(3a+2)}{2}Q+\frac{9(a+1)(3a+4)}{2}Q^2 \\
&& +(1 + \frac{9(a+1)(a+2)}{2} Q^3)x^3  + \cdots.
\een
In particular,
when $a= 0$,
\ben
-\Psi^{(0)}_{0,1}(t; x)  & = & (1 +Q) x + \frac{1}{4} (1+4Q+3Q^2) x^2
+ \frac{1}{9} (1+9Q+18Q^2+10Q^3)x^3 \\
& + & \frac{1}{16} (1+16Q+60Q^2+80Q^3+35Q^4)x^4 \\
& + & \frac{1}{25} (1 + 25Q+ 150Q^2 + 350Q^3 + 350Q^4 + 126Q^5)x^5 + \cdots.
\een

To evaluate the summation
\be \label{eqn:Sum}
\sum_{m_1 \cdot 1 + \cdots + m_l \cdot l = n} \prod_{j=1}^l
\frac{ [n(a+1)((-1)^{j-1} + Q^j ) ]^{m_j}}{j^{m_j}m_j!},
\ee
we regard it as the coefficient of $z^n$ of the following series:
\be \label{eqn:Series}
\sum_{N=0}^\infty z^N \sum_{m_1 \cdot 1 + \cdots + m_l \cdot l = N} \prod_{j=1}^l
\frac{ [n(a+1)((-1)^{j-1} + Q^j ) ]^{m_j}}{j^{m_j}m_j!}.
\ee
This can be easily rewritten as follows:
\be
\exp \sum_{j=1}^\infty \frac{n(a+1)((-1)^{j-1} + Q^j )}{j} z^j
= \frac{(1+z)^{n(a+1)}}{(1 - Qz)^{n(a+1)}}.
\ee
Now we have the binomial expansions:
\be
\frac{1}{(1-Qz)^{n(a+1)}} = \sum_{j=0}^\infty \frac{\prod_{k=0}^{j-1} (n(a+1)+k)}{j!} Q^j z^j
\ee
and
\be
(1+z)^{n(a+1)} = \sum_{m \geq 0} \frac{\prod_{k=0}^{m-1} (n(a+1)-k)}{m!} z^m,
\ee
it follows that the coefficient of $z^n$ of the series in (\ref{eqn:Series}) is
\ben
&& \sum_{j=0}^n  \frac{\prod_{k=0}^{j-1} (n(a+1)+k)}{j!} Q^j  \cdot
\frac{\prod_{k=0}^{n-j-1} (n(a+1)-k)}{(n-j)!} \\
& = & n(a+1) \cdot \sum_{j=0}^n  \frac{\prod_{k=1}^{n-1} (na+j+k)}{j!(n-j)!} Q^j.
\een
So we have proved our first main result:

\begin{theorem} \label{thm:g=0}
For the resolved confiold we have
\be \label{eqn:Psi(0,1)}
\Psi_{0,1}^{(a)}(t; x)
= -  \sum_{n=1}^\infty \frac{x^n}{n}  \sum_{j=0}^n  \frac{\prod_{k=1}^{n-1} (na+j+k)}{j!(n-j)!} Q^j.
\ee
\end{theorem}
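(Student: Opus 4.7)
My plan is to start from the explicit formula (\ref{eqn:Psi(g=0)}) and convert the inner sum over partitions of $n$ into the coefficient of $z^n$ in a generating function that factors in a closed form.

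Fixing $n \geq 1$, the inner partition sum in (\ref{eqn:Psi(g=0)}) is exactly the coefficient of $z^n$ in the exponential
$$\exp\!\left(\sum_{j=1}^\infty \frac{n(a+1)\bigl((-1)^{j-1}+Q^j\bigr)}{j}\, z^j\right),$$
because expanding the exponential and collecting contributions of $z$-degree $n$ yields exactly the multinomial/partition sum appearing on the right-hand side of (\ref{eqn:Psi(g=0)}). The exponent telescopes against the power series for $\log$: the $(-1)^{j-1}/j$ series is $\log(1+z)$ and the $Q^j/j$ series is $-\log(1-Qz)$, so the generating function equals
$$\frac{(1+z)^{n(a+1)}}{(1-Qz)^{n(a+1)}}.$$

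Next, I would apply the binomial series separately to each factor, obtaining
$$(1+z)^{n(a+1)} = \sum_{m\geq 0}\frac{\prod_{k=0}^{m-1}(n(a+1)-k)}{m!}\,z^m, \qquad (1-Qz)^{-n(a+1)} = \sum_{j\geq 0}\frac{\prod_{k=0}^{j-1}(n(a+1)+k)}{j!}\,Q^j z^j,$$
and then extract $[z^n]$ by convolution (setting $m=n-j$ and summing $j$ from $0$ to $n$). The key algebraic simplification is then to observe that the combined rising/falling product
$$P_j := \prod_{k=0}^{j-1}\bigl(n(a+1)+k\bigr)\cdot \prod_{k=0}^{n-j-1}\bigl(n(a+1)-k\bigr)$$
consists of the integers $na+n,na+n+1,\dots,na+n+j-1$ from the first factor and the integers $na+j+1,\dots,na+n$ from the second factor; these two ranges overlap only at $na+n$, so after collecting we get
$$P_j = n(a+1)\prod_{k=1}^{n-1}(na+j+k).$$

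Substituting $P_j$ back into (\ref{eqn:Psi(g=0)}), the factor of $n(a+1)$ coming from $P_j$ exactly cancels the $(a+1)$ and one power of $n$ in the prefactor $-1/\bigl(n^2(a+1)\bigr)$, producing the desired formula. The only nontrivial step is the reindexing in the product simplification above; I do not expect any real obstacle, since as advertised the genus-zero computation is entirely elementary once the generating function has been identified.
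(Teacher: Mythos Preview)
Your proposal is correct and follows essentially the same route as the paper: recognize the partition sum as the $z^n$-coefficient of $\exp\sum_j \frac{n(a+1)((-1)^{j-1}+Q^j)}{j}z^j = (1+z)^{n(a+1)}/(1-Qz)^{n(a+1)}$, expand each factor by the binomial series, convolve, and simplify the resulting product to $n(a+1)\prod_{k=1}^{n-1}(na+j+k)$. Your explicit observation that the two integer ranges overlap only at $na+n$ is exactly the algebra the paper carries out in one line.
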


\subsection{The case of arbitrary genera} \label{sec:g>0}

Similarly,
we understand the summation
\be \label{eqn:Sum(g)}
\sum_{\sum_{j=1}^l j m_j = n} \prod_{j=1}^l
\frac{\big((\frac{(-1)^{j-1}}{[j]} + \frac{Q^j}{[j]}) \cdot  [(a+1)jn] \big)^{m_j}}{j^{m_j}m_j!}
\ee
in (\ref{eqn:F(x1)}) as the coefficient of $z^n$ in
\ben
f^{(a)}_n(z; t) & = & \sum_{N\geq 0} z^N \sum_{m_1 \cdot 1 + \cdots + m_l \cdot l = N} \prod_{j=1}^l
(\frac{(\frac{(-1)^{j-1}}{[j]} + \frac{Q^j}{[j]})^{m_j}}{j^{m_j}m_j!}) \cdot
 \prod_{j=1}^l [(a+1)jn]^{m_j} \\
& = & \exp \sum_{j=1}^{\infty} \frac{\frac{(-1)^{j-1}}{[j]} + \frac{Q^j}{[j]}}{j} [(a+1)jn] z^j.
\een
Because
\be
\frac{[j(a+1)n]}{[j]} = \sum_{k=1}^{(a+1)n} q^{j((a+1)n-2k+1)/2}
\ee
we have
\ben
&& f^{(a)}_n(z;t) \\
& = & \exp \sum_{k=1}^{(a+1)n} \sum_{j=1}^\infty \frac{(-1)^{j-1}}{j}
(q^{j((a+1)n-2k+1)/2} - (-1)^j Q^j q^{j((a+1)n-2k+1)/2})z^j \\
& = & \prod_{k=1}^{(a+1)n} \frac{1 + q^{((a+1)n-2k+1)/2}z}{1-Qq^{((a+1)n-2k+1)/2}z}.
\een
We have the quantum binomial formula:
\be
\prod_{k=1}^{(a+1)n} (1 + q^{((a+1)n-2k+1)/2}z)
= \sum_{j=0}^{(a+1)n} \begin{bmatrix} (a+1)n \\ j \end{bmatrix} z^j,
\ee
and
\be
\prod_{k=1}^{(a+1)n} \frac{1}{1-Qq^{((a+1)n-2k+1)/2}z}
= \sum_{j=0}^\infty \begin{bmatrix} (a+1)n+j-1 \\ j \end{bmatrix} Q^jz^j,
\ee
where the quantum binomial coefficients are defined by:
\be
\begin{bmatrix} n \\ j \end{bmatrix}
= \frac{[n][n-1] \cdots [n-j+1]}{[j][j-1] \cdots [1]}.
\ee
Indeed, using the definition of elementary symmetric functions one has
\ben
&& \prod_{k=1}^{(a+1)n} (1 + q^{((a+1)n-2k+1)/2}z) \\
& = & \sum_{j=0}^n e_j(1, q \cdots, q^{(a+1)n-1}) q^{-j((a+1)n-1)/2} z^j
= \sum_{j=0}^{(a+1)n} \begin{bmatrix} (a+1)n \\ j \end{bmatrix} z^j,
\een
where in the second equality we have used the following equality \cite[p. 26]{Mac}:
\ben
e_j(1, q, \dots, q^{(a+1)n-1})
= q^{j(j-1)/2} \frac{(1-q^{(a+1)n})(1-q^{(a+1)n-1}) \cdots (1 - q^{(a+1)n-j+1})}{(1-q)(1-q^2) \cdots(1-q^j)};
\een
Similarly,
using the definition of complete symmetric functions  one has
\ben
&& \prod_{k=1}^{(a+1)n} \frac{1}{1-Qq^{((a+1)n-2k+1)/2}x} \\
& = & \sum_{j=0}^\infty h_j(1, q, \dots, q^{(a+1)n-1}) q^{-j((a+1)n-1)/2}Q^j z^j \\
& = & \sum_{j=0}^\infty \begin{bmatrix} (a+1)n+j-1 \\ j \end{bmatrix} Q^j z^j,
\een
where in the second equality we have used the following equality \cite[p. 26]{Mac}:
\ben
h_j(1, q, \dots, q^{(a+1)n-1})
= \frac{(1-q^{(a+1)n+j-1})(1-q^{(a+1)n+j-2}) \cdots (1 - q^{(a+1)n})}{(1-q)(1-q^2) \cdots (1-q^j)}.
\een

It follows that we have
\ben
&& f^{(a)}_n(z;t)
=  \sum_{j=0}^\infty \begin{bmatrix} (a+1)n+j-1 \\ j \end{bmatrix} Q^j z^j
\cdot \sum_{j=0}^{(a+1)n} \begin{bmatrix} (a+1)n \\ j \end{bmatrix} z^j,
\een
hence its coefficient of $z^n$ is:
\be
\sum_{j=0}^n \begin{bmatrix} (a+1)n+j-1 \\ j \end{bmatrix} \cdot
\begin{bmatrix} (a+1)n \\ n-j \end{bmatrix} Q^j
= [(a+1)n] \cdot \sum_{j=0}^n \frac{\prod_{k=1}^{n-1}[an+j+k]}{[j]![n-j]!} Q^j.
\ee
Therefore,
the following formula for $a \geq 0$:
\be
\Psi_{1}^{(a)}(\lambda;t; x)
=\sum_{n=1}^\infty \frac{x^n}{ni}  \sum_{j=0}^{n}  \frac{\prod_{k=1}^{n-1}[an+j+k]}{[j]! [n-j]!} Q^j.
\ee
This also holds when $a+1 = - b \leq -1$.
In this case $f^{(a)}_n(z; t)$ is equal to:
\ben
&& \sum_{N\geq 0} z^N \sum_{m_1 \cdot 1 + \cdots + m_l \cdot l = N} \prod_{j=1}^l
(\frac{(\frac{(-1)^{j-1}}{[j]} + \frac{Q^j}{[j]})^{m_j}}{j^{m_j}m_j!}) \cdot
 \prod_{j=1}^l [-bjn]^{m_j} \\
& = & \exp \sum_{j=1}^{\infty} - \frac{\frac{(-1)^{j-1}}{[j]} + \frac{Q^j}{[j]}}{j} [bjn] z^j \\
& = & \exp - \sum_{k=1}^{bn} \sum_{j=1}^\infty \frac{(-1)^{j-1}}{j}
(q^{j(bn-2k+1)/2}  - (-1)^j Q^j q^{j(bn-2k+1)/2}) z^j \\
& = & \prod_{k=1}^{bn} \frac{1-Qq^{(bn-2k+1)/2}x}{1 + q^{(bn-2k+1)/2} z}
= \sum_{j=0}^{bn} \begin{bmatrix} bn \\ j \end{bmatrix} (-Qz)^j \cdot
\sum_{j=0}^\infty \begin{bmatrix} bn+j-1 \\ j \end{bmatrix} (-z)^j.
\een
Its coefficient of $z^n$ is:
\ben
&& \sum_{j=0}^n \begin{bmatrix} bn \\ j \end{bmatrix} (-Q)^j
\cdot \begin{bmatrix} bn+n-j-1 \\ n-j \end{bmatrix} (-1)^{n-j} \\
& = & (-1)^n \frac{[bn] \cdot \prod_{k=1}^{n-1} [bn-j + (n-k)]}{[j]!\cdot [n-j]!]} \\
& = &  \frac{[(a+1)n] \cdot  \prod_{k=1}^{n-1} [an+j +k]}{[j]!\cdot [n-j]!]}.
\een
So we have proved the following generalization of Theorem \ref{thm:g=0}:

\begin{theorem} \label{thm:g}
For the resolved confiold we have
\be \label{eqn:Psi(g,1)}
\Psi_{1}^{(a)}(\lambda; t; x)
= \sum_{n=1}^\infty \frac{x^n}{ni}  \sum_{j=0}^{n}  \frac{\prod_{k=1}^{n-1} [an+j+k]}{[j]! [n-j]!} Q^j.
\ee
\end{theorem}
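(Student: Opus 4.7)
The plan is to generalize the generating function argument used for Theorem \ref{thm:g=0}. Starting from equation (\ref{eqn:F(x1)}) together with the definition of $\Psi_1^{(a)}(\lambda;t;x)$ via symmetrization, I would recognize that the sum (\ref{eqn:Sum(g)}) equals the coefficient of $z^n$ in the formal power series
$$f_n^{(a)}(z;t) = \sum_{N\geq 0} z^N \sum_{\sum_j j m_j = N} \prod_j \frac{\big((\tfrac{(-1)^{j-1}}{[j]} + \tfrac{Q^j}{[j]})\,[(a+1)jn]\big)^{m_j}}{j^{m_j}\, m_j!}.$$
Using the standard identity $\sum_\lambda z^{|\lambda|} \prod_j c_j^{m_j(\lambda)}/(j^{m_j} m_j!) = \exp \sum_j c_j z^j/j$, this rewrites $f_n^{(a)}(z;t)$ as an exponential of a single sum.

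The next step is to assume first that $a+1 > 0$ and apply the quantum-integer expansion $[j(a+1)n]/[j] = \sum_{k=1}^{(a+1)n} q^{j((a+1)n-2k+1)/2}$. Interchanging the order of summation turns the inner $j$-sum into a pair of standard logarithmic series, yielding the product form
$$f_n^{(a)}(z;t) = \prod_{k=1}^{(a+1)n} \frac{1 + q^{((a+1)n-2k+1)/2}\, z}{1 - Q q^{((a+1)n-2k+1)/2}\, z}.$$
I would then expand the numerator and denominator products by the two quantum binomial identities
$$\prod_{k=1}^{M}\!\bigl(1 + q^{(M-2k+1)/2} z\bigr) = \sum_{j\geq 0} \begin{bmatrix} M \\ j \end{bmatrix} z^j, \qquad \prod_{k=1}^{M}\frac{1}{1 - Q q^{(M-2k+1)/2} z} = \sum_{j\geq 0} \begin{bmatrix} M+j-1 \\ j \end{bmatrix} Q^j z^j,$$
both of which follow from Macdonald's evaluations of $e_j$ and $h_j$ at $(1,q,\ldots,q^{M-1})$. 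Taking $M = (a+1)n$ and reading off the coefficient of $z^n$ in the Cauchy product gives $\sum_{j=0}^n \tqbinom{(a+1)n+j-1}{j} \tqbinom{(a+1)n}{n-j} Q^j$, which after extracting the common factor $[(a+1)n]$ produces precisely (\ref{eqn:Psi(g,1)}).

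The main obstacle is the case $a+1 \leq -1$, where the product in the second step would have a negative number of factors and neither quantum binomial formula applies directly. I would handle this by setting $b := -(a+1) > 0$, flipping signs so that the exponent becomes $-\sum_{k=1}^{bn}\sum_j \frac{(-1)^{j-1}}{j}(q^{j(bn-2k+1)/2} - (-1)^j Q^j q^{j(bn-2k+1)/2}) z^j$, and re-summing to obtain $\prod_{k=1}^{bn}(1 - Q q^{(bn-2k+1)/2} z)/(1 + q^{(bn-2k+1)/2} z)$. Expanding via the same two quantum binomial identities in the variable $-z$ (respectively $-Qz$) and extracting the coefficient of $z^n$ produces $(-1)^n [bn]\, \prod_{k=1}^{n-1}[bn-j+n-k]/([j]![n-j]!)$. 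The key identity $-[bn-j+n-k] = [an+j+k]$ (valid because $[m] = -[-m]$) lets the $(-1)^n$ collapse, and one verifies directly that the resulting expression matches the right-hand side of (\ref{eqn:Psi(g,1)}), unifying both sign ranges of $a$.
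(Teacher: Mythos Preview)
Your proposal is correct and follows the paper's own argument essentially step for step: the same exponential rewriting of (\ref{eqn:Sum(g)}), the same product factorization via $[j(a+1)n]/[j]=\sum_{k} q^{j((a+1)n-2k+1)/2}$, the same two quantum binomial expansions from Macdonald, and the same separate treatment of $a+1=-b\le -1$ using $[m]=-[-m]$ to reconcile the signs. There is nothing to add.
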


\section{Ooguri-Vafa Integrality for One-Point Functions}

In this section we check the Ooguri-Vafa Integrality Conjecture for the one-point function
of the resolved conifold in general framing.
This recovers and generalizes some results by Aganagic-Vafa \cite{AV}, Aganagic-Klemm-Vaf \cite{AKV}  and
Mari\~no-Vafa \cite{Mar-Vaf}.

\subsection{Genus $0$  Ooguri-Vafa integral invariants}

Ooguri and Vafa \cite{Oog-Vaf} conjectured
an integral property of open string invariant of Calabi-Yau $3$-folds,
similar to the Gopakumar-Vafa integrality \cite{Gop-Vaf2} for closed string invariants.
For the resolved conifold with one outer brane,
it takes the following form for genus $0$ one-point functions \cite{AV}:
\be \label{eqn:OV}
x\frac{\pd}{\pd x} \Psi^{(a)}_{0,1}(t; x) = \sum_{m,n \geq 1} m d^{(a)}_{k,m} \log (1 - e^{-kt} x^m)
\ee
for some integers $d^{(a)}_{k,m}= \sum_s N^{(a)}_{k,m,s}$.
Aganagic and Vafa \cite{AV} checked this for $a=0$,
and a table for $d_{k,m}:=d^{(0)}_{k,m,0}$ can be found in their paper.
We now establish the general case.

By (\ref{eqn:Psi(0,1)}),
\be \label{eqn:DPsi}
x\frac{\pd}{\pd x} \Psi_{0,1}^{(a)}(t; x)
= -  \sum_{n=1}^\infty x^n \sum_{k=0}^n  \frac{\prod_{j=1}^{n-1} (na+j+k)}{k!(n-k)!} (-1)^ke^{-kt}.
\ee
The right-hand side of (\ref{eqn:OV}) can be written as:
\be
-  \sum_{k=0}^{\infty}\sum_{m=1}^{\infty} m d^{(a)}_{k,m} \sum_{j=1}^{\infty} \frac{1}{j} e^{-jkt} x^{jm}.
\ee
The first few terms are£º
\begin{multline*}
- d^{(a)}_{0,1} (x+ \frac{1}{2} x^2 + \frac{1}{3} x^3 + \cdots)
- 2d^{(a)}_{0,2} (x^2 + \frac{1}{2} x^4 + \frac{1}{3} x^6 + \cdots ) \\
- 3d^{(a)}_{0,3} (x^3 + \frac{1}{2} x^6 + \frac{1}{3} x^9 + \cdots )
 - \cdots \cdots \\
- d^{(a)}_{1,1} (e^{-t} x+ \frac{1}{2} e^{-2t} x^2 + \frac{1}{3} e^{-3t} x^3 + \cdots)
- 2d^{(a)}_{1,2} (e^{-t} x^2 + \frac{1}{2} e^{-2t} x^4 + \frac{1}{3} e^{-3t} x^6 + \cdots ) \\
- 3d^{(a)}_{1,3} (e^{-t} x^3 + \frac{1}{2} e^{-2t} x^6 + \frac{1}{3} e^{-3t} x^9 + \cdots )
- \cdots \cdots \\
- d^{(a)}_{2,1} (e^{-2t} x+ \frac{1}{2} e^{-4t} x^2 + \frac{1}{3} e^{-6t} x^3 + \cdots)
- 2d^{(a)}_{2,2} (e^{-2t} x^2 + \frac{1}{2} e^{-4t} x^4 + \frac{1}{3} e^{-6t} x^6 + \cdots ) \\
- 3d^{(a)}_{2,3} (e^{-2t} x^3 + \frac{1}{2} e^{-4t} x^6 + \frac{1}{3} e^{-6t} x^9 + \cdots )
- \cdots \cdots \\
\end{multline*}
Putting similar terms together£º
\begin{multline*}
- d^{(a)}_{0,1} x - (2 d^{(a)}_{0,2} + \frac{1}{2} d^{(a)}_{0,1})x^2
- (3d^{(a)}_{0,3} + \frac{1}{3} d^{(a)}_{0,1}) x^3
- (4d^{(a)}_{0,4} + d^{(a)}_{0,2} + \frac{1}{4} d^{(a)}_{0,1}) x^4  - \cdots \\
- d^{(a)}_{1,1} e^{-t} x - 2 d^{(a)}_{1,2} e^{-t} x^2
- 3 d^{(a)}_{1,3} e^{-t} x^3 - 4 d^{(a)}_{1,4} e^{-t} x^4 - \cdots \\
- d^{(a)}_{2,1} e^{-2t} x - (2d^{(a)}_{2,2} + \frac{1}{2} d^{(a)}_{1,1}) e^{-2t} x^2 - 3 d^{(a)}_{2,3} e^{-2t} x^3
- (4d^{(a)}_{2,4} + d^{(a)}_{1,2}) e^{-2t} x^4 - \cdots \\
- d^{(a)}_{3,1} e^{-3t} x - 2d^{(a)}_{3,2} e^{-3t} x^2 - (3d^{(a)}_{3,3} + \frac{1}{3} d^{(a)}_{1,1}) e^{-3t} x^3
- d^{(a)}_{3,4} e^{-3t} x^4 - \cdots.
\end{multline*}
The coefficient of $e^{-kt} x^m$ is of the form:
\be
-m d^{(a)}_{k,m} - \sum_{j|(k,m), j > 1} \frac{m}{j^2} d^{(a)}_{k/j, m/j}.
\ee
Therefore,
by comparing with the right-hand side of (\ref{eqn:DPsi}):
\be \label{eqn:d(k,m)Recursion}
d^{(a)}_{k,m} + \sum_{j|(k,m), j > 1} \frac{1}{j^2} d^{(a)}_{k/j, m/j}
= (-1)^k \frac{\prod_{j=1}^{m-1} (ma+j+k)}{m \cdot k!(m-k)!}.
\ee
From this one can recursively determine all $d^{(a)}_{k,m}$.
Let $\mu: \bN \to \{0,1, -1\}$ be the M\"obius mu functions.
I.e,
\be
\mu(n) = \begin{cases}
(-1)^r, & n = p_1 \cdots p_r, p_1, \dots, p_r \;\; \text{are distinct primes}, \\
0, & \text{otherwise}.
\end{cases}
\ee
Then
\be \label{eqn:d(k,m)Solution}
d_{k,m}^{(a)}
= \sum_{n | (k,m)}  \frac{\mu(n)}{n^2} (-1)^{k/n} \frac{\prod_{j=1}^{m/n-1}
(ma/n+j+k/n)}{(m/n) \cdot (k/n)!(m/n-k/n)!}.
\ee
Indeed,
(\ref{eqn:d(k,m)Recursion}) has a unique solution,
and by the following property of the M\"bius mu function
\be
\sum_{l|n} \mu(l) = \delta_{n,1},
\ee
one can check that (\ref{eqn:d(k,m)Solution}) is a solution.

For example,
when $(k, m) = 1$,
\be
d^{(a)}_{k,m} = \frac{(-1)^k}{m} \frac{\prod_{j=1}^{m-1} (ma+j+k)}{k! \cdot (m-k)!}, \qquad m > 1.
\ee
If $k$ is a prime number $p$,
$$d^{(a)}_{p,pn} = \frac{(-1)^p}{m} \frac{\prod_{j={1}}^{m-1} (ma+j+p)}{p! \cdot (pn-p)!}
- \frac{1}{p^2} d_{1,n}^{(a)},$$
therefore,
\be
d^{(a)}_{p,pn} = \frac{(-1)^p}{m} \frac{\prod_{j={1}}^{m-1} (ma+j+p)}{p! \cdot (pn-p)!}
+ \frac{\prod_{j=2}^n (na+j)}{p^2 \cdot n!}.
\ee
If $k$ is a square of a prime $p$, $m=pn$, $(k,n) =1$,
then
\be
d^{(a)}_{p^2,pn} =  \frac{(-1)^p}{pn} \frac{\prod_{j=1}^{pn-1} (pna+j+p^2)}{p^2! \cdot (pn-p^2)!}
+(-1)^{p-1}\frac{\prod_{i=1}^{n-1} (na+i+p) }{p^2n \cdot p!(n-p)!}.
\ee

When $a$ is taken to be $0$,
the numbers $d^{(0)}_{k,m}$ match with the numbers $d_{k,m}$ in Table 1 in \cite{AV}.
The above formula for $d^{(a)}_{k,m}$ now takes the following simplified forms:
\be
d_{k,m}
= \sum_{n | (k,m)}  \frac{\mu(n)}{n^2} (-1)^{k/n} \frac{(m/n+k/n-1)!}{(m/n) \cdot (k/n)!^2(m/n-k/n)!}.
\ee
When $(k,m) = 1$,
\be \label{eqn:d(k,m)}
d_{k,m} = \frac{(-1)^k}{m} \frac{(m+k-1)!}{k! \cdot k! \cdot (m-k)!}.
\ee
If $k$ is a prime number $p$,
\be
d_{p,pn} =\frac{(-1)^p}{pn} \frac{(pn+p-1)!}{p! \cdot p! \cdot (pn-p)!} + \frac{1}{p^2}.
\ee
If $k$ is a square of a prime $p$, $m=pn$ and $(p,n) =1$,
then
\be
d_{p^2,pn} =  \frac{(-1)^p}{pn} \frac{(pn+p^2-1)!}{p^2! \cdot p^2! \cdot (pn-p^2)!}
+ (-1)^{p-1} \frac{(n+p-1)! }{p^2n \cdot p! \cdot p!(n-p)!}.
\ee

It is easy to see that
\ben
d_{m,m}
& = & \sum_{n | m}  \frac{\mu(n)}{n^2} (-1)^{m/n} \frac{(2m/n-1)!}{(m/n) \cdot (m/n)!^2(m/n-m/n)!} \\
& = & \frac{1}{2m^2} \sum_{l|m} (-1)^l \mu(m/l) \binom{2l}{l}.
\een
The first few terms of the sequence $(|d_{m,m}|)_{m \geq 1}$  are given by
\ben
&& 1, 1, 1, 2, 5, 13, 100, 300, 925, 2911, 9386, 30771, 102347, 344705, 1173960, \dots,
\een
matches with the sequence number A131868 of the AT\&T On-Line Encyclopedia of Integer Sequences.
It has the following combinatorial meaning:
$na(n)$ is the number of $n$-member subsets of $\{1,2,3,...,2n-1\}$ that sum to $1 \pmod n$.
Similarly,
\be
d_{k,k+1}
= (-1)^{k} \frac{(2k)!}{(k+1) \cdot k!^2}.
\ee
The first few terms  of the sequence $(|d_{k,k+1}|)_{k \geq 1}$ are given by
\ben
&& 1, 1, 1, 2, 5, 14, 42, 132, 429, 1430, 4862, 16796, 58786, 208012, 742900, 267440, \\
&& 9694845, 35357670, \dots,
\een
This is the sequence A000108 which are the Catalan numbers (also called Segner numbers):
\be
C(n) = \frac{1}{n+1}\binom{2n}{n} = \frac{(2n)!}{n!(n+1)!}.
\ee
These numbers have numerous combinatorial meanings.
It will be interesting to find out combinatorial meanings for the sequences $(d_{k, k+n})_{k \geq 1}$
for other fixed $n$.

\subsection{Another related half integral property}

One can also do the following expansion:
\ben
&& x\frac{\pd}{\pd x} \Psi^{(a)}_{0,1}(t; x)  = \log (1 - x)
+ a\log (1 -x^2) + \frac{3a(a+1)}{2} \log(1-x^3) \\
& + & \frac{4a(a+1)(2a+1)}{3} \log(1-x^4)
+ \frac{25a(a+1)(5a^2+5a+2)}{24} \log(1 -x^5) \\
& + & \frac{3a(a+1)(36a^3+54a^2+31a+4)}{10} \log(1-x^6)  \\
& + & \log(1-Qx) + 2(a+1) \log(1-Qx^2) + \cdots
\een
This also has some integral properties.

\begin{conjecture}
There are half integers $e^{(a)}_{k,m}$ such that
\be
x\frac{\pd}{\pd x} \Psi^{(a)}_{0,1}(t; x) = \sum_{m,n \geq 1} m e^{(a)}_{k,m} \log (1 - Q^k x^m)
\ee
where $Q=-e^{-t}$.
\end{conjecture}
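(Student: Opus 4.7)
The plan is to derive the conjectured expansion from the Ooguri-Vafa product formula established earlier in Section 5.1,
\[
x\tfrac{\pd}{\pd x}\Psi^{(a)}_{0,1}(t;x) \;=\; \sum_{k,m \geq 1} m\, d^{(a)}_{k,m}\, \log(1 - e^{-kt} x^m),
\]
with integer coefficients $d^{(a)}_{k,m}$. Since $Q = -e^{-t}$, we have $e^{-kt} = (-Q)^k$, so the summand is already of the form $\log(1 - Q^k x^m)$ when $k$ is even, while for odd $k$ it becomes $\log(1 + Q^k x^m)$. The whole argument then rests on the elementary identity
\[
\log(1+y) \;=\; \log(1-y^2) - \log(1-y),
\]
applied to each odd-$k$ term.

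Applying this identity to every odd-$k$ term produces an extra contribution $\sum_{k\text{ odd},\, m} m\, d^{(a)}_{k,m}\log(1-Q^{2k}x^{2m})$. Re-indexing via the bijection $(k,m)\mapsto(K,M)=(2k,2m)$, whose image is exactly $\{(K,M) : K \equiv 2 \pmod{4},\ M \text{ even}\}$, rewrites it as $\sum (M/2)\,d^{(a)}_{K/2,M/2}\log(1-Q^K x^M)$. Collecting the coefficient of $\log(1 - Q^K x^M)$ gives
\[
e^{(a)}_{K,M} \;=\; \begin{cases} -d^{(a)}_{K,M}, & K \text{ odd}, \\[2pt] d^{(a)}_{K,M}, & K \equiv 0 \pmod{4},\ \text{ or }\ K \equiv 2 \pmod{4}\text{ and } M \text{ odd}, \\[2pt] d^{(a)}_{K,M} + \tfrac12\, d^{(a)}_{K/2,\,M/2}, & K \equiv 2 \pmod{4}\text{ and } M \text{ even}. \end{cases}
\]
Since each $d^{(a)}_{\ast,\ast}$ is an integer, every $e^{(a)}_{K,M}$ lies in $\tfrac12\bZ$, which is the desired half-integrality.

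Two formal points should be checked before claiming the proof is complete. First, the expansion of a formal power series in the basis $\{\log(1 - Q^k x^m)\}_{k,m\geq 1}$ is unique: the lowest-order term of $\log(1 - Q^{k'}x^{m'})$ is $-Q^{k'}x^{m'}$, so the coefficients can be recovered by a Mobius-type inversion exactly as in the derivation of the formula for $d^{(a)}_{k,m}$ in Section 5.1. Second, the rearrangements of infinite sums make sense at the level of formal power series because each coefficient of $Q^K x^M$ receives contributions from only finitely many pairs $(k,m)$ with $k \mid K$ and $m \mid M$. Neither point poses a real obstacle; the substance of the proof is just the case-by-case bookkeeping above, and the argument in fact shows that the conjecture is \emph{equivalent} to the Ooguri-Vafa integrality already established for the $d^{(a)}_{k,m}$, with the half-integers arising precisely from the doubling $(k,m)\mapsto(2k,2m)$ hidden inside $\log(1+y) = \log(1-y^2) - \log(1-y)$.
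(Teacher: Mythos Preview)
The paper does \emph{not} prove this statement: it is stated as a Conjecture, followed only by the recursion analogous to (5.3), the M\"obius-inverted closed formula for $e^{(a)}_{k,m}$, and a table of values for $a=0$. No argument for half-integrality is given there.

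Your reduction via $\log(1+y)=\log(1-y^{2})-\log(1-y)$ is correct and is a genuine addition to what the paper contains: it shows that the half-integrality of the $e^{(a)}_{k,m}$ is an immediate consequence of the integrality of the $d^{(a)}_{k,m}$ from Section~5.1. The bookkeeping you wrote down is right, and the two formal points (uniqueness of the expansion, finiteness of contributions to each coefficient) are handled adequately. So, to the extent that Section~5.1 really ``establishes'' the integrality of the $d^{(a)}_{k,m}$, you have proved what the paper left as a conjecture.

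Two caveats. First, Section~5.1 does not actually contain a proof that the $d^{(a)}_{k,m}$ are integers: it derives the closed formula (5.4) and checks it against the Aganagic--Vafa table, but the integrality of the M\"obius sum in (5.4) is never argued. So strictly speaking you have reduced one unproven statement to another within the same paper; whether one regards the Ooguri--Vafa integrality here as known from elsewhere (e.g.\ via the identification with quantum dimensions alluded to in the Introduction) is a matter of what you are willing to import. Second, your final sentence overstates things: the relation you derived, $e^{(a)}_{K,M}=\pm d^{(a)}_{K,M}+\tfrac12\,d^{(a)}_{K/2,M/2}$ in the relevant case, shows that integrality of $d$ implies half-integrality of $e$, but the converse fails---for odd $K$ you only get $d^{(a)}_{K,M}=-e^{(a)}_{K,M}\in\tfrac12\bZ$, not $\bZ$. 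The two statements are not equivalent; the Ooguri--Vafa integrality is strictly stronger.
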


Similar to (\ref{eqn:d(k,m)Recursion}) we have
\be \label{eqn:e(k,m)Recursion}
m e^{(a)}_{k,m} + \sum_{j|(k,m), j > 1} \frac{m}{j^2} e^{(a)}_{k/j, m/j}
= \frac{\prod_{j=1}^{m-1} (ma+j+k)}{k!(m-k)!}.
\ee
Then
\be
e_{k,m}^{(a)}
= \sum_{n | (k,m)}  \frac{\mu(n)}{n^2} \frac{\prod_{j=1}^{m/n-1}
(ma/n+j+k/n)}{(m/n) \cdot (k/n)!(m/n-k/n)!}.
\ee
For example, when $(k, m) = 1$,
\be
e^{(a)}_{k,m} = \frac{1}{m} \frac{\prod_{j={k+1}}^{m+k-1} (ma+j)}{k! \cdot (m-k)!}, \qquad m > 1,
\ee
in particular, when $a=0$,
\be
e^{(0)}_{k,m} = \frac{1}{m} \frac{(m+k-1)!}{k! \cdot k! \cdot (m-k)!}.
\ee
Clearly,
\be
e^{(a)}_{k,m} = |d^{(a)}_{k,m}|
\ee
for $(k,m) =1$.
For $(k,m) \neq 1$,
they also coincide for many cases.
Take $a=0$ case for example.
We have the following table for $e_{k,m}:=e^{(0)}_{k,m}$:

\begin{center}
\begin{tabular}{|c|cccccc|}
\hline
$m$ & $e_{1,m}$ & $e_{2,m}$ & $e_{3,m}$ & $e_{4,m}$ & $e_{5,m}$ & $e_{6,m}$ \\
\hline
 $1$ &  $1$ & 0 & 0 & 0 & 0 & 0 \\
 $2$ &  $1$ & \fbox{1/2} & 0 & 0 & 0 & 0 \\
 $3$ &  $1$ & 2 & 1 & 0 & 0 & 0 \\
 $4$ &  $1$ & \fbox{7/2} & 5 & 2 & 0 & 0 \\
 $5$ &  $1$ & 6 & 14 & 14 & 5 & 0 \\
 $6$ &  $1$ & \fbox{17/2} & 31 & 52 & 42 & \fbox{25/2} \\
 $7$ &  $1$ & 12 & 60 & 150 & 198 & 132 \\
 $8$ &  $1$ & \fbox{31/2} & 105 & 360 & 693 & \fbox{1499/2} \\
 $9$ &  $1$ & 20 & 171 & 770 & 2002 & 3114 \\
$10$ &  $1$ & \fbox{49/2} & 264 & 1500 & 5045 & \fbox{10507} \\
$11$ &  $1$ & 30 & 390 & 2730 & 11466 & 30576 \\
$12$ &  $1$ & \fbox{71/2} & 556 & 4690 & 24024 & \fbox{158809/2} \\
$13$ &  $1$ & 42 & 770 & 7700 & 47124 & 188496 \\
$14$ &  $1$ & \fbox{97/2} & 1040 & 12152 & 87516 & \fbox{415686} \\
$15$ &  $1$ & 56 & 1375 & 18564 & 155195 & 862194 \\
$16$ &  $1$ & \fbox{127/2}\; & 1785 & 27552 & 264537 & \fbox{3394839/2} \\
\hline
\end{tabular}
\end{center}

Comparing with the table of $d_{k,m}$ in \cite{AV},
we notice that $e_{k,m} = |d_{k,m}|$ for most of the pairs $(k,m)$.
(The places where they are different are put in boxes.)

\subsection{Some integral invariants in arbitrary genera}

By the integrality prediction of Ooguri-Vafa \cite{Oog-Vaf},
there should be integers $N^{(a)}_{m,k,s}$ such that
\be \Psi^{(a)}_1(\lambda; t; x)
= \sum_{n=1}^\infty \sum_{m, k, s} \frac{N_{m, k, s}}{in[n]} e^{n(-kt + is \lambda)} ((-a)^ax)^{nm},
\ee
where $m,k \in \bZ$, $s \in \half \bZ$.
By Theorem \ref{thm:g},
we should have:
\be \label{eqn:OVg}
\sum_{n=1}^\infty \sum_{m, k, s} \frac{N^{(a)}_{m, k, s}}{i[n]} e^{n(-kt + is \lambda)} x^{nm}
= \sum_{n=1}^\infty \frac{((-1)^ax)^n}{i}  \sum_{j=0}^{n}  \frac{\prod_{k=1}^{n-1} [an+j+k]}{[j]! [n-j]!} Q^j.
\ee

For fixed $m, k$,
$N^{(a)}_{m,k,s} = 0$ for $s\gg 0$,
so that
$$N_{m,k}^{(a)}(q^{1/2}) = \sum_{s \in \half \bZ} N^{(a)}_{k,m,s} q^s
\in \bZ[q^{1/2}, q^{-1/2}]$$
is a Laurent polynomial in $q^{1/2}$.
Comparing the coefficients of $x^m e^{-kt}$ on both sides of (\ref{eqn:OVg}),
one gets:
\be
\sum_{j | (k,m)} \frac{N^{(a)}_{m/j, k/j}(q^{j/2})}{[j]}
= (-1)^{ma+k} \frac{\prod_{j=1}^{m-1} [am+j+k]}{[k]! [m-k]!}.
\ee
Hence
\be
\frac{N^{(a)}_{m,k}(q^{1/2})}{[1]} = \sum_{n |(k,m)} \mu(n) (-1)^{(ma+k)/n}
\frac{\prod_{j=1}^{m/n-1} [am+nj+k]}{\prod_{j=1}^{k/n} [nj]\cdot \prod_{j=1}^{(m-k)/n} [nj]}.
\ee
In particular,
when $(m,k) = 1$,
\be
N^{(a)}_{m,k}(q^{1/2}) = (-1)^{ma+k}[1] \frac{\prod_{j=1}^{m-1} [am+j+k]}{[k]! [m-k]!}.
\ee
When $k$ is a prime $p$ and $m = kn$ for some integer $n$,
\be
N^{(a)}_{pn,p}
= (-1)^{pna+p}[1] \frac{\prod_{j=1}^{pn-1} [apn+j+p]}{[p]! [pn-p]!}
- (-1)^{na+1}[1] \frac{\prod_{j=1}^{n-1} [an+pj+1]}{[p] \cdot \prod_{j=1}^{n-1} [pj]!}.
\ee
When $k=p^2$ for some prime, $m=pn$ and $(p,n) =1$,
\be
N^{(a)}_{pn,p^2}
= (-1)^{pna+p^2}[1] \frac{\prod_{j=1}^{pn-1} [apn+j+p^2]}{[p^2]! [pn-p^2]!}
-(-1)^{na+p} [1] \frac{\prod_{j=1}^{n-1} [anp+pj+p^2]}{\prod_{j=1}^{p} [pj]\cdot \prod_{j=1}^{n-p} [pj]}.
\ee

For example,
when $a =0$,
$i$ times the right-hand side of (\ref{eqn:OVg}) is:
\ben
&& (\frac{1}{[1]} - \frac{1}{[1]}e^{-t} ) x
+ (\frac{1}{[2]}- \frac{[2]}{[1]^2}e^{-t}  + \frac{[3]}{[2][1]}e^{-2t}) x^2\\
& + & (\frac{1}{[3]} - \frac{[3]}{[1]^2} e^{-t}
+ \frac{[3][4]}{[2][1]^2}e^{-2t} - \frac{[4][5]}{[3][2][1]}e^{-3t})x^3 + \cdots \\
& = & (\frac{1}{[1]} x + \frac{1}{[2]} x^2 + \frac{1}{[3]} x^3 + \cdots) \\
& - & (\frac{1}{[1]} xe^{-t} + \frac{1}{[2]} x^2e^{-2t} + \frac{1}{[3]} x^3 e^{-3t} + \cdots) \\
& - & (\frac{q^{1/2}+q^{-1/2}}{[1]} x^2e^{-t} + \frac{q+q^{-1}}{[2]} x^2 e^{-2t}
+ \frac{q^{3/2}+q^{-3/2}}{[3]} x^3 e^{-3t} + \cdots) \\
& + & (\frac{q^{1/2}+q^{-1/2}}{[1]} x^2e^{-2t} + \frac{q+q^{-1}}{[2]} x^4 e^{-4t}
+ \frac{q^{3/2}+q^{-3/2}}{[3]} x^6 e^{-6t} + \cdots) \\
& - & (\frac{q+1+q^{-1}}{[1]} x^3e^{-t}
+ \frac{q^2+1+q^{-2}}{[2]} x^6e^{-2t} + \frac{q^3+1+q^{-3}}{[3]} x^9 e^{-3t} + \cdots) \\
& + & (\frac{q^2+q+2+q^{-1}+q^{-2}}{[1]} x^3 e^{-2t} + \cdots) \\
& - & (\frac{q^2+1+q^{-2}}{[1]} x^3 e^{-3t} + \cdots) + \cdots.
\een
In other words,
\begin{align*}
N^{(0)}_{1,0} & = 1, & N^{(0)}_{1,1} & = -1, &
N^{(0)}_{2,0} & = 0, \\
N^{(0)}_{2,1} & = - (q^{1/2}+q^{-1/2}), & N^{(0)}_{2,2} & = q^{1/2}+q^{-1/2}, &
N^{(0)}_{3,0} & = 0, \\
N^{(0)}_{3,1} & = -(q+1+q^{-1}), &
N^{(0)}_{3,2} & = q^2+q+2+q^{-1}+q^{-2}, & N^{(0)}_{3,3} & = - (q^2+1+q^{-2}).
\end{align*}
When $a=1$,
$i$ times the right-hand side of (\ref{eqn:OVg}) becomes:
\ben
&& (\frac{1}{[1]} - \frac{e^{-t}}{[1]}) (-x)
+ (\frac{[3]}{[2][1]} - \frac{[4]e^{-t}}{[1]^2} + \frac{[5]e^{-2t}}{[2][1]}) x^2 \\
& + & (\frac{[4][5]}{[3]!}
- \frac{[5][6]}{[2][1]^2} e^{-t} + \frac{[6][7]}{[2][1]^2}e^{-2t} - \frac{[7][8]}{[3][2][1]} e^{-3t}) (-x)^3
+ \cdots \\
& = & - (\frac{1}{[1]} x + \frac{1}{[2]} x^2 + \frac{1}{[3]} (-x)^3 + \cdots) \\
& + & (\frac{1}{[1]} xe^{-t} + \frac{1}{[2]} x^2e^{-2t} + \frac{1}{[3]} x^3 e^{-3t} + \cdots) \\
& + & (\frac{q^{1/2}+q^{-1/2}}{[1]} x^2 + \frac{q+q^{-1}}{[2]} x^4 + \cdots ) \\
& - & (\frac{q^{3/2}+q^{1/2}+q^{-1/2}+q^{-3/2}}{[1]} x^2e^{-t}
+ \frac{q^{3}+q+q^{-1}+q^{-3}}{[2]} x^4e^{-2t} + \cdots) \\
& + & (\frac{q^{3/2}+q^{-3/2}}{[1]} x^2e^{-2t} + \frac{q^{3}+q^{-3}}{[2]} x^4e^{-4t} + \cdots) \\
& - & (\frac{q^2-q+1-q^{-1}+q^{-2}}{[1]} x^3 + \cdots) + \cdots.
\een
In other words,
\ben
&& N^{(1)}_{1,0} = 1, \qquad N^{(1)}_{1,1} = -1, \qquad
N^{(1)}_{2,0} = q^{1/2}+q^{-1/2}, \\
&& N^{(1)}_{2,1} = - (q^{3/2} + q^{1/2}+q^{-1/2}+q^{-3/2}), \qquad N^{(1)}_{2,2} = q^{3/2}+q^{-3/2}, \\
&& N^{(1)}_{3,0} = q^2-q+1-q^{-1}+q^{-2}, \qquad
N^{(1)}_{3,1} = - \sum_{k=-2}^2 q^k \cdot (q^2+1+q^{-2}), \\
&& N^{(1)}_{3,2} = \sum_{k=-3}^3 q^k \cdot (q^2+1+q^{-2}), \\
&& N^{(1)}_{3,3} = - (q+1+q^{-1})(q-1+q^{-1})(q^3+1+q^{-3}).
\een

\section{The Mirror Curve of the Resolved Conifold}

We apply results on open string one-point functions above to relate them to the mirror curve
of the resolved conifold.

\subsection{The proof of the Conjecture of Aganagic-Vafa}

For $a=0$, we have proved the following identity:
\be
\Psi^{(0)}_{0,1}(t;x) = - \sum_{n=1}^\infty \frac{x^n}{n} \sum_{j=0}^n \frac{(n+j-1)!}{j!j!(n-j)!}Q^j.
\ee
This is exactly the prediction by Aganagic-Vafa \cite{AV}.
From this one can recover the equation of the mirror curve of the resolved conifold with
the outer brane in zero framing as follows.
Aganagic-Vafa \cite{AV} conjectured that the mirror curve when the outer brane is in framing $a$
is described by a plane curve
$$h(x, y) = 0,$$
which determines near $x = 0$ a function $y=y(x; t)$
so that:
\be
x\frac{d}{d x} \Psi_{0,1}^{(a)}(t;x) = \log y(x;t).
\ee
Now we have
\ben
x\frac{\pd}{\pd x} \Psi^{(0)}_{0,1}(t;x)
& = & - \sum_{n=1}^\infty x^n \sum_{j=0}^n \frac{(n+j-1)!}{j!j!(n-j)!}Q^j \\
& = & - \sum_{j, k \geq 0, j+k > 0}^\infty x^{j+k} \frac{(k+2j-1)!}{j!j!k!}Q^j.
\een
One can take the summation in $k$ first to get:
\ben
x\frac{\pd}{\pd x} \Psi^{(0)}_{0,1}(t;x)
& = & \ln (1 - x) - \sum_{j=1}^\infty \frac{(2j-1)!}{j!j!} \frac{(Qx)^j}{(1-x)^{2j}}.
\een
By the well-known series:
\be
\ln (\half + \half \sqrt{1+t^2})
= - \sum_{j=1}^\infty (-1)^j \frac{(2j-1)!}{j!j!} (\frac{t}{2})^{2j},
\ee
we then get:
\be
x\frac{\pd}{\pd x} \Psi^{(0)}_{0,1}(t;x)
= \ln (\half (1 - x) + \half \sqrt{(1-x)^2+4x e^{-t}}).
\ee
It follows that
\be
y:=y(x;t) = \half (1 - x) + \half \sqrt{(1-x)^2+4x e^{-t}},
\ee
and so
\be
y^2 - (1-x) y - xe^{-t} = 0,
\ee
or equivalently,
\be \label{eqn:Mirror0Framing}
x+y-1 - xy^{-1}e^{-t} = 0.
\ee
This is exactly the equation for the mirror curve.
So we have proved

\begin{theorem}
Aganagic-Vafa's Conjectures holds for the case of the resolved conifold \cite[\S 5.1]{AV}
when the outer brane is in zero framing.
\end{theorem}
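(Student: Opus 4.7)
The plan is to start from the closed-form expression for $\Psi^{(0)}_{0,1}(t;x)$ established in Theorem \ref{thm:g=0} (specialized to $a=0$), apply the operator $x\partial_x$ termwise, and identify the resulting series with $\log y(x;t)$, where $y(x;t)$ is the branch of the mirror curve (\ref{eqn:Mirror0Framing}) that is analytic near $x=0$ with $y(0;t)=1$. Since the Aganagic--Vafa conjecture in the zero-framing case amounts to the identity $x\partial_x\Psi^{(0)}_{0,1}(t;x)=\log y(x;t)$, the theorem reduces to a concrete series identity that I can attack by a change of summation order followed by recognition of a classical generating function.

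The first step is purely algebraic. Applying $x\partial_x$ termwise and then reindexing the double sum by $(j,k)$ with $k=n-j$ decouples the $k$-dependence from the non-binomial factors; the inner sum over $k\geq 0$ of $\binom{k+2j-1}{k}x^k$ collapses to $(1-x)^{-2j}$ for $j\geq 1$, while the $j=0$ contribution gives $-\log(1-x)$. This leaves a single series in $j$ with coefficients $(2j-1)!/(j!)^2$ in powers of $Qx/(1-x)^2$, times a $\log(1-x)$ prefactor.

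The key analytic input is the classical expansion
\be
\log\Bigl(\half+\half\sqrt{1+u}\Bigr)
= -\sum_{j\geq 1}(-1)^j\frac{(2j-1)!}{j!\,j!}\Bigl(\frac{u}{4}\Bigr)^j,
\ee
which I would recognize once the $j$-series is isolated. Substituting $u=4xe^{-t}/(1-x)^2$ (the sign working out because $Q=-e^{-t}$) converts the series into $\log\bigl(\half+\half\sqrt{1+4xe^{-t}/(1-x)^2}\bigr)$; combining with the $\log(1-x)$ prefactor produces a closed form for $y(x;t)$ as $\half(1-x)+\half\sqrt{(1-x)^2+4xe^{-t}}$. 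Squaring and rearranging verifies that this $y$ satisfies the quadratic $y^2-(1-x)y-xe^{-t}=0$, which after dividing by $y$ is exactly (\ref{eqn:Mirror0Framing}).

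I expect the main obstacle to be the recognition of the $j$-series as $\log$ of a square-root expression, since the preceding combinatorial manipulation and the final algebraic check are routine. If the classical expansion above is not at hand, a fallback is to postulate $y(x;t)$ as the distinguished root of (\ref{eqn:Mirror0Framing}), expand it as a power series in $x$ using the quadratic, and match Taylor coefficients against those of $\exp(x\partial_x\Psi^{(0)}_{0,1})$ computed from Theorem \ref{thm:g=0}; although this still relies on the same binomial identity, it replaces recognition of a named series with bookkeeping that can be verified order by order and then promoted via the uniqueness of the analytic branch with $y(0;t)=1$.
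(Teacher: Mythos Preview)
Your proposal is correct and follows essentially the same route as the paper: specialize Theorem~\ref{thm:g=0} to $a=0$, apply $x\partial_x$, reindex by $(j,k)=(j,n-j)$, sum the $k$-series to $(1-x)^{-2j}$ (with the $j=0$ piece giving $\log(1-x)$), recognize the remaining $j$-series via the classical expansion of $\log\bigl(\tfrac12+\tfrac12\sqrt{1+u}\bigr)$, and then read off the quadratic for $y$. One small slip of phrasing: the $\log(1-x)$ term is \emph{added} to the $j$-series, not a multiplicative ``prefactor''; with that corrected your derivation matches the paper's line for line.
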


\subsection{Proof of the Aganagic-Klemm-Vafa conjecture for framing transformation}

The mirror curve for the nonzero framing case is more difficult to find.
It might be possible to generalize the summation method in last subsection by
the theory of hypergeometric series for $a \neq 0$,
but we do not know how to do so.
We will take a more elementary approach.

In \S \ref{sec:g=0} we have seen that the sum in (\ref{eqn:Sum}) is equal to the coefficient of
$z^n$ in $\big(\frac{1+z}{1-Qz} \big)^{(a+1)n}$,
i.e.,
it is equal to
\be
\res_{z=0} \frac{1}{z^{n+1}} \big(\frac{1+z}{1-Qz} \big)^{(a+1)n}
= \frac{1}{2\pi i} \int_{|z| = \epsilon} \frac{1}{z^{n+1}} \big(\frac{1+z}{1-Qz}\big)^{(a+1)n} dz.
\ee
for sufficiently small $\epsilon > 0$ (say $\epsilon < \frac{1}{|Q|}$),
Therefore,
(\ref{eqn:Psi(g=0)}) can be rewritten as follows (we assume $|x|$ is sufficiently small to ensure
uniform convergence):
\ben
(x\frac{\pd}{\pd x})^2 \Psi_{0,1}^{(a)}(t; x)
& = & -  \sum_{n=1}^\infty \frac{x^n}{a+1}
\frac{1}{2\pi i} \int_{|z| = \epsilon} \frac{1}{z^{n+1}} \big(\frac{1+z}{1-Qz}\big)^{(a+1)n} dz \\
& = & - \frac{1}{2\pi (a+1)i} \int_{|z| = \epsilon}  \frac{1}{z} \sum_{n=1}^\infty \frac{x^n}{z^n}
\big(\frac{1+z}{1-Qz}\big)^{(a+1)n} dz \\
& = & - \frac{1}{2\pi (a+1)i} \int_{|z| = \epsilon} \biggl( \frac{1}{z - x \big(\frac{1+z}{1-Qz}\big)^{a+1}}
- \frac{1}{z}  \biggr) dz \\
& = & - \frac{1}{2\pi (a+1)i} \int_{|z| = \epsilon} \biggl( \frac{1}{z - x \big(\frac{1+z}{1-Qz}\big)^{a+1}}
\biggr) dz  + \frac{1}{a+1}.
\een
We use Cauchy's residue theorem to evaluate the contour integral of the analytic function
$$f(z) = \frac{1}{z- x \big(\frac{1+z}{1-Qz}\big)^{a+1}}
= \frac{(1-Qz)^{a+1}}{z(1-Qz)^{a+1} - x (1+z)^{a+1}}.$$
Because $Q$ and $x$ are very small,
so the denominator is close to $z$,
therefore by Rouch\'e's theorem,
there is only one root $z_0=z_0(x_0;Q)$ for $z(1-Qz)^{a+1} - x (1+z)^{a+1}$ inside the disc $|z|< \epsilon$.
This determines $z_0$ as an analytic function in $x$ by the Lagrange inversion formula,
i.e.,
by finding the inverse series of
\be
x = \frac{z_0(1-Qz_0)^{a+1}}{(1+z_0)^{a+1}}.
\ee
The first couple of terms are given by:
\be
z = x + (a+1)(1+Q)x^2 +  \cdots.
\ee
So we get:
\begin{multline} \label{eqn:D^2Psi}
(x\frac{\pd}{\pd x})^2 \Psi_{0,1}^{(a)}(t; x) \\
=  - \frac{1}{a+1} \frac{(1-Qz_0)^{a+1}}{(1-Qz_0)^{a+1}-(a+1)Qz_0(1-Qz_0)^a - (a+1)x(1+z_0)^a} + \frac{1}{a+1}.
\end{multline}
One can use the equation
\be \label{eqn:z0}
z_0(1-Qz_0)^{a+1} - x (1+z_0)^{a+1} = 0.
\ee
to rewrite the right-hand side of (\ref{eqn:D^2Psi}).
Differentiate this equation on both sides with respect to $x$,
we get:
\be \label{eqn:Dz0}
\frac{\pd z_0}{\pd x} = \frac{(1+z_0)^{a+1}}{(1-Qz_0)^{a+1}-(a+1)Qz_0(1-Qz_0)^a - (a+1)x(1+z_0)^a}.
\ee
By (\ref{eqn:z0}) and (\ref{eqn:Dz0}),
(\ref{eqn:D^2Psi}) becomes:
\be
(x\frac{\pd}{\pd x})^2 \Psi_{0,1}^{(a)}(t; x) = - \frac{x}{(a+1)z_0} \frac{\pd z_0}{\pd x} + \frac{1}{a+1}.
\ee
Integrating once:
\be
x\frac{\pd}{\pd x} \Psi_{0,1}^{(a)}(t; x) =\frac{1}{a+1} \log (\frac{x}{z_0})
= \log \frac{1+z_0}{1- Qz_0}.
\ee
According to Aganagic-Vafa \cite{AV},
the mirror curve is determined by the following equation:
\be \label{eqn:YinX}
y:=y(x;t;a) = \frac{1+z_0}{1-Qz_0} = (\frac{x}{z_0})^{1/(a+1)}.
\ee

\begin{theorem}
The mirror curve for the resolved conifold with an outer brane and framing $a$ is given by the following equation:
\be \label{eqn:MirrorAFraming}
y+xy^{-a} - 1 - e^{-t}xy^{-a-1} = 0.
\ee
\end{theorem}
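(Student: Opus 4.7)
The plan is to eliminate the auxiliary parameter $z_0$ from the two relations established just before the theorem. By (\ref{eqn:z0}), $z_0$ is the small solution of $z_0(1-Qz_0)^{a+1} = x(1+z_0)^{a+1}$, whose immediate consequence is $(x/z_0)^{1/(a+1)} = (1-Qz_0)/(1+z_0)$; by (\ref{eqn:YinX}) the Aganagic--Vafa prescription $x\partial_x\Psi_{0,1}^{(a)} = \log y$ picks out $y = (x/z_0)^{1/(a+1)}$. So the task is to realize the implicit algebraic function $y = y(x;t;a)$ defined by this pair as the zero locus of the stated polynomial in $x,y$.

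First, the identity $y^{a+1} = x/z_0$ yields one rational expression for $z_0$ as a function of $x$ and $y$, namely $z_0 = xy^{-a-1}$. Second, the M\"obius relation $y = (1-Qz_0)/(1+z_0)$ is linear-fractional in $z_0$, hence inverts to $z_0 = (y-1)/(e^{-t} - y)$ after substituting $Q = -e^{-t}$. Equating the two expressions $xy^{-a-1} = (y-1)/(e^{-t}-y)$ and clearing denominators produces $xy^{-a-1}e^{-t} - xy^{-a} = y - 1$, which upon rearrangement is precisely the stated equation $y + xy^{-a} - 1 - e^{-t}xy^{-a-1} = 0$.

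The main obstacle is not analytic but purely algebraic bookkeeping; the substantial work (the residue calculation of \S 6.2 that extracts $\Psi_{0,1}^{(a)}$ as a contour integral, the application of Rouch\'e's theorem to pick out the unique small root $z_0$, and the single integration recovering $x\partial_x\Psi_{0,1}^{(a)} = \log y$) has already been carried out. As a consistency check, specialization to $a=0$ should reduce the polynomial $y+xy^{-a}-1-e^{-t}xy^{-a-1}$ to the quadratic of (\ref{eqn:Mirror0Framing}) from Theorem 6.1, and expanding around $z_0 \approx x \approx 0$, $y \to 1$ confirms that the correct analytic branch is being selected.
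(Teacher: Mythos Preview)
Your proof is correct and follows essentially the same approach as the paper: both eliminate the auxiliary variable $z_0$ from the pair $z_0 = xy^{-a-1}$ and the M\"obius relation $y = (1-Qz_0)/(1+z_0)$ to arrive at (\ref{eqn:MirrorAFraming}). The only cosmetic difference is that you invert the M\"obius transformation and equate two expressions for $z_0$, whereas the paper substitutes $z_0 = xy^{-a-1}$ directly into the M\"obius relation; algebraically these amount to the same elimination.
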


\begin{proof}
By (\ref{eqn:YinX}) we get:
\be \label{eqn:z0inXY}
z_0 = x y^{-a-1}.
\ee
One can rewrite (\ref{eqn:z0}) as
\be
(\frac{x}{z_0})^{1/(a+1)} = \frac{1-Qz_0}{1+z_0}.
\ee
Plug in (\ref{eqn:YinX}) and (\ref{eqn:z0inXY}):
\be
y = \frac{1 - Q xy^{-a-1}}{1 + xy^{-a-1}}.
\ee
This is equivalent to (\ref{eqn:MirrorAFraming}).
\end{proof}

In \cite{AKV},
it was conjectured that the equation of the mirror curve in zero framing and the $a$ framing are related by
the following framing transformation:
\begin{align}
x & \mapsto xy^{-a}, & y & \mapsto y.
\end{align}
Note (\ref{eqn:Mirror0Framing}) is transformed to (\ref{eqn:MirrorAFraming}).
So we have established the Aganagic-Klemm-Vafa conjecture for framing transformation
in the case of the resolved conifold with one outer brane.

\subsection{The quantum mirror curve}
Because
\be
\lambda \Psi^{(a)}_1(\lambda; t; x_1)
= \sum_{g \geq 0} \lambda^{2g} \Psi^{(a)}_{g,1}(t; x)
\ee
is a deformation of $\Psi^{(a)}_{0,1}(t;x)$, with the parameter $\lambda$ serving as the Planck constant,
if we set
\be
y(\lambda;t,x) = \exp (\lambda \cdot x\frac{d}{d x} \Psi_{1}^{(a)}(\lambda; t;x)),
\ee
then $y(\lambda;t;x)$ is a deformation of $y(t;x)$.
The family of curves $y = y(\lambda; t; x)$ can be understood as the ``quantum mirror curve"
of the resolved conifold.

In \S \ref{sec:g>0} we have seen that the sum in (\ref{eqn:Sum(g)}) is equal to the coefficient of
$z^n$ in
\ben
&& f^{(a)}_n(z;t)
=
\begin{cases}
\prod_{k=1}^{(a+1)n} \frac{1 + q^{((a+1)n-2k+1)/2}z}{1-Qq^{((a+1)n-2k+1)/2}z}, & a \geq 0, \\
\prod_{k=1}^{bn} \frac{1-Qq^{(bn-2k+1)/2}z}{1 + q^{(bn-2k+1)/2}z}, & a+1=-b \leq -1.
\end{cases}
\een
We will treat the case of $a \geq 0$ here,
the $a \leq -2$ case is similar.
The coefficient is equal to
\ben
&& \res_{z=0} \prod_{k=1}^{(a+1)n} \frac{1 + q^{((a+1)n-2k+1)/2}z}{1-Qq^{((a+1)n-2k+1)/2}z} \\
& = & \frac{1}{2\pi i} \int_{|z| = \epsilon} \frac{1}{z^{n+1}} \prod_{k=1}^{(a+1)n}
\frac{1 + q^{((a+1)n-2k+1)/2}z}{1-Qq^{((a+1)n-2k+1)/2}z} dz,
\een
for sufficiently small $\epsilon > 0$, now depending on $n$.
To avoid difficulties in analysis,
we will understand the residue formally as taking the coefficient of $z^{-1}$.
Now (\ref{eqn:Psi(g,1)}) can be rewritten as follows:
\be
x\frac{\pd}{\pd x} \Psi_{1}^{(a)}(\lambda;t; x)
= \res_{z=0}  \sum_{n=1}^\infty \frac{x^n}{i \cdot [(a+1) n]}
\prod_{k=1}^{(a+1)n} \frac{1 + q^{((a+1)n-2k+1)/2}z}{1-Qq^{((a+1)n-2k+1)/2}z}.
\ee
We do not know how to take the summation on the right-hand side at present.

\end{document}